\theoremstyle{plain} {
  \newtheorem{theorem}{Theorem}[subsection]
  \newtheorem{corollary}[theorem]{Corollary}

}
\theoremstyle{definition}
{
  \newtheorem{definition}[theorem]{Definition}
  \newtheorem{remark}[theorem]{Remark}
  \newtheorem{claim}[theorem]{Claim}
  \newtheorem{example}[theorem]{Example}
}
\renewcommand{\subsubsection}{\sssection\rm}
\newcommand{\id}{\mathrm{id}}
\newcommand{\Spec}{\mathrm{Spec}}
\newcommand{\Hom}{\mathrm{Hom}}
\newcommand{\Th}{\mathrm{Th}}
\newcommand{\SmOp}{\mathcal Sm\mathcal Op}
\newcommand{\Sm}{\mathcal Sm}
\newcommand{\Aff}{\mathbf {A}}
\newcommand{\MSS}{\mathbf{MSS}}
\newcommand{\Pro}{\mathbf {P}}
\newcommand{\Gr}{\mathrm{Gr}}
\newcommand{\K}{\mathcal K}
\newcommand{\ZZ}{\mathbb{Z}}
\let\minus=\smallsetminus
\let\tensor=\otimes
\newcommand{\colim}{\operatorname{colim}}
\newcommand{\SH}{\operatorname{SH}}
\newcommand{\cm}{\mathrm{cm}}
\newcommand \hra {\hookrightarrow }
\begin{document}

\title{On the relation of Voevodsky's algebraic cobordism to Quillen's $K$-theory}

\author{I.~Panin\footnote{Universit\"at Bielefeld, SFB 701, Bielefeld, Germany}
\footnote{Steklov Institute of Mathematics at St.~Petersburg, Russia}
\and K.~Pimenov\footnotemark[2]
\and O.~R{\"o}ndigs\footnote{Institut f\"ur Mathematik, Universit\"at Osnabr\"uck, Osnabr\"uck, Germany}}

\date{September 26, 2007}

\maketitle

\begin{abstract}
Quillen's algebraic $K$-theory is reconstructed via Voevodsky's
algebraic cobordism. More precisely,
for a ground field $k$
the algebraic cobordism $\Pro^1$-spectrum
$\mathrm{MGL}$ of Voevodsky
is considered as
a commutative $\Pro^1$-ring spectrum.
Setting
$\mathrm{MGL}^i = \oplus_{p-2q = i} \ \mathrm{MGL}^{p,q}$
we regard the bigraded theory
$\mathrm{MGL}^{p,q}$
as just a graded theory.
There is a unique ring morphism
$\phi\colon \mathrm{MGL}^0(k) \to \mathbb Z$
which sends the class
$[X]_{\mathrm{MGL}}$
of a smooth projective $k$-variety $X$ to the Euler
characteristic
$\chi(X, \mathcal O_X)$
of the structure sheaf
$\mathcal O_X$.
Our main result states that
there is a canonical grade preserving isomorphism of ring
cohomology theories 
\[\xymatrix{
{\varphi}\colon \mathrm{MGL}^{\ast}(X,U) 
\otimes_{\mathrm{MGL}^{0}(k)} \mathbb Z \ar[r]^-{\cong} &
\mathrm{K}^{TT}_{- *}(X,U)= \mathrm{K}^{\prime}_{- *}(X-U)}
\]
on the category
$\SmOp/k$
in the sense of
\cite{PSorcoh},
where
$K^{TT}_*$
is Thomason-Trobaugh $K$-theory and
$\mathrm{K}^{\prime}_*$
is Quillen's
$\mathrm{K}^{\prime}$-theory.
In particular, the left hand side is a ring cohomology theory.
Moreover both theories are oriented in the sense of
\cite{PSorcoh}
and
$\varphi$
respects the orientations.
The result is an algebraic version of a theorem due to Conner and Floyd.
That theorem reconstructs complex $K$-theory via complex cobordism
\cite{CF}.
\end{abstract}

\section{Introduction}

The isomorphism $\varphi$ is constructed in two steps. In the first step we
supply
an isomorphism
\begin{equation}\label{eq:2}
\bar \varphi\colon \mathrm{MGL}^{\ast}(X,U) \otimes_{\mathrm{MGL}^{0}(k)} \mathbb \mathrm{BGL}^{0}(k) \to
\mathrm{BGL}^{\ast}(X,U)
\end{equation}
where
$\mathrm{BGL}$
is the Voevodsky $K$-theory $\Pro^1$-spectrum representing 
$(2,1)$-periodic algebraic $K$-theory. To describe the second step recall that
$\mathrm{BGL}^{\ast}=\mathrm{K}^{TT}_{- *}[\beta,\beta^{-1}]$
for an element
$\beta \in \mathrm{BGL}^{2,1}(k) \subseteq \mathrm{BGL}^{0}(k)$.
Taking the quotient of both sides of the last isomorphism modulo
the ideal generated by the element
$(\beta + 1)$
we get the required isomorphism
$\bar {\bar \varphi}$.

Here are a few words on the construction of
$\bar \varphi$.
The $\Pro^1$-spectrum
$\mathrm{MGL}$
is defined as
$(S^0, \Th(\mathcal{T}(1)), \Th(\mathcal{T}(2)), \dots )$,
where
$\mathcal{T}(i)$
is the tautological vector bundle over the Grassmannian
$\mathrm{Gr}(i, \infty)$
and
$\Th(\mathcal{T}(i))$
is its Thom space.
There is a monoidal structure on
$\mathrm{MGL}$
described in
\cite{UniThm}.
The obvious morphism
$$
(\ast, \Th(\mathcal{T}(1)), \Th(\mathcal{T}(1)) \wedge \Pro^1, \dots ) \to
(S^0, \Th(\mathcal{T}(1)), \Th(\mathcal{T}(2)), \dots )
$$
defines an element
$th^{\mathrm{MGL}} \in \mathrm{MGL}^{2,1}(\Th(\mathcal{T}(1)))$,
which is a tautological orientation of
$\mathrm{MGL}$.
It turns out that the pair
$(\mathrm{MGL},th^{\mathrm{MGL}})$
is universal among all pairs
$(E, th)$ with a commutative ring
$\Pro^1$-spectrum $E$ and a Thom orientation
$th \in E^{2,1}(\Th(\mathcal{T}(1)))$.
The last assertion means
that there exists a unique monoidal morphism
$$
\varphi_{th}\colon \mathrm{MGL} \to E
$$
in the motivic stable homotopy category
$\mathrm{SH}(k)$
taking the class
$th^{\mathrm{MGL}}$
to
$th$ (see \cite{Vez} or \cite[Thm.~2.3.1]{UniThm} for precise formulations). 
That morphism
$\varphi_{th}$
gives rise to a functor transformation
$$
\bar \varphi_{th}\colon \mathrm{MGL}^{\ast}(X,U) \otimes_{\mathrm{MGL}^{0}(k)} E^{0}(k) \to
E^{\ast}(X,U).
$$
The pair
$(\mathrm{BGL}, th^{\mathrm{MGL}})$,
as it is described in
\cite{BGL} or in
\ref{OrientationsOfMGLandK},
induces the morphism $\bar \varphi$ mentioned in~(\ref{eq:2}).

There exist pairs $(E,th^E)$ such that
the induced monoidal morphism
$\bar \varphi_{th}$
is not an isomorphism.
For example one can take $E=\mathrm{H}\mathbb{Z}$ with
its canonical orientation, as mentioned in \cite[Introduction]{LM1}.
However for the pair
$(\mathrm{BGL}, th^{\mathrm{MGL}})$
it is an isomorphism.
A section
$s\colon K^{TT}_0 \to \mathrm{MGL}^{0,0}$
of the natural transformation
$\varphi\colon \mathrm{MGL}^{0,0} \to \mathrm{BGL}^{0,0}=K^{TT}_0$
is crucial for the proof of the main result.
This section is constructed in Section~\ref{AgeneralResult}.

\section{Recollection}
\label{Introduction}

Our main result relates
Voevodsky's algebraic cobordism theory
$\mathrm{MGL}^{\ast,\ast}$
to Quillen's
$K^{\prime}$-theory. We analyze these theories via their representing
objects in the motivic stable homotopy category $\SH(S)$.
Consider
\cite[Appendix]{BGL}
for the basic terminology, notation, constructions, definitions,
results on motivic homotopy theory. Nevertheless, here is a short
summary. 

\subsection{Motivic homotopy theory}

Let $S$ be a Noetherian separated finite-dimensional scheme $S$.
One may think of $S$ being the spectrum of a field or the integers.
A {\em motivic space
over\/} $S$ is a functor
\[ A\colon \SmOp/S \to \mathbf{sSet} \]
(see \cite[Appendix]{BGL}). The category of motivic spaces over $S$ is denoted
$\mathbf{M}(S)$. This definition of a motivic space is different from the one considered
by Morel and Voevodsky in \cite{MV} -- they consider only those simplicial presheaves
which are sheaves in the Nisnevich topology on $\Sm/S$. With our definition the
Thomason-Trobaugh $K$-theory functor obtained by using big vector bundles is a motivic space
on the nose. It is not a simplicial Nisnevich sheaf. This is the
reason why we prefer to work with the
above notion of ``space''.

We write
$\mathrm{H}^{\cm}_\bullet (S)$
for the pointed motivic homotopy category
and
$\mathrm{SH}^\cm(S)$
for
the stable motivic homotopy category over $S$ as constructed in
\cite[A.3.9, A.5.6]{BGL}.
By
\cite[A.3.11 resp.~A.5.6]{BGL}
there are canonical equivalences to
$\mathrm{H}_\bullet(S)$
of
\cite{MV}
resp.
$\mathrm{SH}(S)$
of
\cite{V1}.
Both
$\mathrm{H}^\cm_\bullet(S)$
and
$\mathrm{SH}^\cm(S)$
are equipped with
closed symmetric monoidal structures such that the
$\Pro^1$-suspension spectrum functor
is a strict symmetric monoidal functor
\[ \Sigma^\infty_{\Pro^1} \colon \mathrm{H}^\cm_\bullet(S)\to \mathrm{SH}^\cm(S). \]
Here
$\Pro^1$
is considered as a motivic space pointed by
$\infty \in \Pro^1$.
The symmetric monoidal structure $(\wedge,\mathbb{I}_S = \Sigma^\infty_{\Pro^1}S_+)$
on the homotopy category
$\mathrm{SH}^\cm(S)$ is constructed on the model category level by
employing the category $\MSS(S)$ of symmetric $\Pro^1$-spectra.
This symmetric monoidal category satisfies the properties required by
Theorem 5.6 of Voevodsky congress talk
\cite{V1}. From now on we will usually omit the superscript $(-)^\cm$.

Every $\Pro^1$-spectrum $E$ represents a cohomology theory on the category
of pointed motivic spaces. Namely, for a pointed motivic space $(A,a)$ set
\begin{eqnarray*}
  E^{p,q}(A,a)& =& \Hom_{\mathrm{SH}(S)}\bigl(\Sigma^\infty_{\Pro^1}(A,a), 
  \Sigma^{p,q}(E)\bigr)  \quad \mathrm{and} \\
  E^{\ast,\ast}(A,a) & = &\bigoplus_{p,q} E^{p,q}(A,a).
\end{eqnarray*}
This definition extends to motivic spaces via the functor $A\mapsto A_+$ which
adds a disjoint basepoint.
That is, for a non-pointed motivic space
$A$ set
$E^{p,q}(A)=E^{p,q}(A_+,+)$
and
$E^{\ast,\ast}(A)= \oplus_{p,q} E^{p,q}(A)$.

Every
$X \in \Sm/S$
defines a representable motivic space --  constant in the simplicial direction -- 
taking an smooth $S$-scheme $U$ to
$\mathrm{Hom}_{\Sm/S}(U,X)$. It is not possible in general to choose
a basepoint for representable motivic spaces. 
So we regard $S$-smooth varieties as motivic spaces (non-pointed)
and set
$$
E^{p,q}(X)=E^{p,q}(X_+,+).
$$

Given a $\Pro^1$-spectrum $E$ we will reduce the double grading on the cohomology
theory
$E^{\ast,\ast}$
to a grading by defining
$E^m = \oplus_{m = p-2q}E^{p,q}$
and
$E^{\ast}=\oplus_{m}E^m$.
{\it We often write\/}
$E^{\ast}(k)$
{\it for}
$E^{\ast}(\mathrm{Spec}(k))$
{\it below}.

A $\Pro^1$-ring spectrum is a monoid
$(E,\mu,e)$
in
$(\mathrm{SH(S)},\wedge, \mathbb{I}_S)$.
A commutative $\Pro^1$-ring
spectrum is a commutative monoid
$(E,\mu,e)$
in
$(\mathrm{SH(S)},\wedge, 1)$.

The cohomology theory $E^{\ast}$ defined by a
$\Pro^1$-ring spectrum is a ring cohomology theory.
The cohomology theory $E^{\ast}$ defined by a commutative
$\Pro^1$-ring spectrum is a ring cohomology theory,
however it is not necessarily graded commutative.
The cohomology theory $E^{\ast}$ defined by an oriented commutative
$\Pro^1$-ring spectrum (to be defined below)
is a graded commutative ring cohomology theory by \cite{PY}.

Occasionally a $\Pro^1$-ring spectrum $(E,\mu,e)$ might have a model
$(E',\mu',e')$ which is a symmetric $\Pro^1$-ring spectrum, that is, a
symmetric $\Pro^1$-spectrum $E'$ equipped with a strict multiplication $\mu'\colon E'\wedge E'
\to E'$
which is strictly associative and strictly unital for the unit
$e'\colon \Sigma^\infty_{\Pro^1}(S_+) \to E'$. This is the case for the algebraic
cobordism $\Pro^1$-ring spectrum $\mathrm{MGL}$, as described below. Such a model for the
algebraic $K$-theory $\Pro^1$-ring
spectrum $\mathrm{BGL}$ is currently not known to us.

For the rest of the paper let $k$ be a field and $S=\mathrm{Spec}(k)$.
Usually $S$ will be replaced by $k$ in the notation.
We work in this text with the algebraic cobordism
$\Pro^1$-spectrum
$\mathrm{MGL}$
and the algebraic $K$-theory
$\Pro^1$-spectrum
$\mathrm{BGL}$
as described in
\cite[Defn.~1.2.4]{BGL}
and
\cite[Sect.~2.1]{UniThm}
respectively.
The spectrum
$\mathrm{MGL}$
is a commutative ring
$\Pro^1$-spectrum
by that construction.
The spectrum
$\mathrm{BGL}$
is equipped with a structure of a commutative
$\Pro^1$-ring spectrum
as explained in
\cite[Thm.~2.1.1]{BGL}.

The $\Pro^1$-spectrum $\mathrm{BGL}$ has the following
underlying sequence of pointed motivic sequence:
$(\mathcal K_0, \mathcal K_1, \mathcal K_2, \dots )$.
Here are the main properties of $\mathrm{BGL}$ that will be used below:
\begin{itemize}
\item[(i)]
$\mathcal K_i = \mathcal K_0$ for all $i$ and
$\mathcal K_0$ is weakly equivalent to the Thomason-Trobaugh $K$-theory motivic space
$\mathbb{K}^{TT}$
\cite[Sect. 1.2]{BGL}.
\item[(ii)]
$\mathrm{BGL}$ is an $\Omega_{\Pro^1}$-spectrum; in particular, for any pointed
motivic space $A$
one has
$\mathrm{BGL}^{0,0}(A)=\Hom_{\mathrm{H}_\bullet(k)}(A,\mathcal{K}_0)$.
\item[(iii)]
There is a motivic weak equivalence
$w\colon \mathbb{Z} \times \mathrm{Gr}\to \mathcal K_0$
which classifies the tautological element
$\xi_\infty = \tau_{\infty}- \infty \in \mathbb{K}^{TT}_0(\mathbb{Z} \times \mathrm{Gr})$
\cite[Sect. 1.2]{BGL}.
\end{itemize}

Another property of $\mathrm{BGL}$ is that it represents
algebraic $K$-theory as a ring cohomology theory.
Let
$K^{TT}_*$
be Thomason-Trobaugh $K$-theory functor
\cite{TT}.
The morphism
$\Sigma^{\infty}\mathcal K_0 \to \mathrm{BGL}$
adjoint to the identity map
$id\colon \mathcal K_0 \to \mathcal K_0$
defines an isomorphism
$$
Ad\colon K^{TT}_{- *} \to \mathrm{BGL}^{*,0}
$$
of cohomology theories
on the category $\SmOp/k$
in the sense of
\cite{PSorcoh}.
This isomorphism respects the product structures by 
\cite[Cor. 2.2.4]{BGL}.

An invertible Bott element
$\beta \in \mathrm{BGL}^{2,1}(\Spec(k))$
is constructed in
\cite[Section 1.3]{BGL}.
For every pointed motivic space $A$ the morphism
\begin{eqnarray}
\label{BGLandTT}
\mathrm{BGL}^{*,0}(A) \otimes \mathrm{BGL}^{0}(\Spec(k)) \to
\mathrm{BGL}^{\ast,\ast}(A)
\end{eqnarray}
given by
$a \otimes \beta \mapsto a \cup \beta$
is a ring isomorphism by \cite[Sect.~1.3]{BGL}. Furthermore
$\mathrm{BGL}^{0}(\Spec(k))= \ZZ[\beta, \beta^{-1}]$
is the ring of Laurent polynomials on the Bott element
$\beta$. To say the same in a different way,
\begin{eqnarray}
\label{BGLlaurantAndTT}
\mathrm{BGL}^{*,0}(A)[\beta, \beta^{-1}]\cong
\mathrm{BGL}^{\ast,\ast}(A).
\end{eqnarray}
The special case
$A = X/(X\minus Z)$
where $X$ is a smooth $k$-variety and
$Z \subset X$ is a closed subset
implies the following result
\cite[Cor.~1.3.6]{BGL}.

\begin{corollary}
\label{LaurantPolinomial}
Let $X$ be a smooth $k$-scheme, $Z$ a closed subset of $X$
and $U = X\minus Z$ its open complement. Then
there are isomorphisms
\begin{eqnarray}
\label{BGLAndTTrings1}
K^{TT}_{- *, Z}(X)[\beta, \beta^{-1}] & \cong &\mathrm{BGL}^{\ast,\ast}(X/U) = \mathrm{BGL}^{\ast}(X/U)\\
\label{BGLAndTTrings2}
K^{TT}_{- *,Z}(X) & \cong &
 \mathrm{BGL}^{\ast,\ast}(X/U)/(\beta + 1)\mathrm{BGL}^{\ast,\ast}(X/U)
\end{eqnarray}
of ring cohomology theories on $\SmOp/k$ in the sense of
\cite{PSorcoh}.
\end{corollary}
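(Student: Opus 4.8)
The plan is to read off both isomorphisms as the special case $A = X/U = X/(X\minus Z)$ of the results of \cite[Sect.~1.3]{BGL} already recorded above, transporting the left-hand side along the isomorphism $Ad$. First I would recall that for a smooth $k$-variety $X$ with closed subset $Z$ and open complement $U$, the pointed motivic space $X/U$ represents $Z$-supported cohomology, so that for any $\Pro^1$-spectrum $E$ one has $E^{\ast,\ast}(X/U) = E^{\ast,\ast}_Z(X)$ naturally in the pair $(X,U)\in\SmOp/k$, the long exact sequence of the triple being the one built into the structure of a cohomology theory on $\SmOp/k$. Applying the isomorphism $Ad\colon K^{TT}_{-\ast}\to\mathrm{BGL}^{\ast,0}$ of ring cohomology theories to the object $(X,U)$ then gives a natural multiplicative isomorphism $K^{TT}_{-\ast,Z}(X)\cong\mathrm{BGL}^{\ast,0}(X/U)$, multiplicativity being \cite[Cor.~2.2.4]{BGL}.

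Next I would substitute $A = X/U$ into~(\ref{BGLlaurantAndTT}), obtaining a ring isomorphism $\mathrm{BGL}^{\ast,0}(X/U)[\beta,\beta^{-1}]\cong\mathrm{BGL}^{\ast,\ast}(X/U)$. Since $\beta\in\mathrm{BGL}^{2,1}(k)$, its degree in the single grading $E^{m}=\oplus_{m=p-2q}E^{p,q}$ is $2-2=0$; hence $\mathrm{BGL}^{\ast,\ast}(X/U)$ and $\mathrm{BGL}^{\ast}(X/U)$ coincide as ungraded rings and $\beta$ is a homogeneous unit of degree $0$, so the isomorphism becomes $\mathrm{BGL}^{\ast,0}(X/U)[\beta,\beta^{-1}]\cong\mathrm{BGL}^{\ast}(X/U)$ with $\beta$ of degree $0$ on the left. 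Composing with the isomorphism of the previous paragraph produces~(\ref{BGLAndTTrings1}). Naturality in $(X,U)$, compatibility with the boundary maps of the long exact sequences, and multiplicativity are inherited from $Ad$ together with the fact that $\beta$, being pulled back from $\Spec(k)$, is a global class.

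For~(\ref{BGLAndTTrings2}) I would use that $\beta+1$ is a homogeneous non-zero-divisor of degree $0$ and that the $\ZZ[\beta,\beta^{-1}]$-algebra map $\ZZ[\beta,\beta^{-1}]\to\ZZ$, $\beta\mt-1$, identifies $\ZZ[\beta,\beta^{-1}]/(\beta+1)$ with $\ZZ$. Since, as a cohomology theory on $\SmOp/k$, $\mathrm{BGL}^{\ast,\ast}(X/U)$ is identified via~(\ref{BGLAndTTrings1}) with $K^{TT}_{-\ast,Z}(X)\otimes_{\ZZ}\ZZ[\beta,\beta^{-1}]$, its long exact sequence being that of $K^{TT}$ tensored over $\ZZ$ with the free module $\ZZ[\beta,\beta^{-1}]$, reduction modulo the homogeneous ideal $(\beta+1)$ replaces the coefficient ring $\ZZ[\beta,\beta^{-1}]$ by $\ZZ[\beta,\beta^{-1}]\otimes_{\ZZ[\beta,\beta^{-1}]}\ZZ=\ZZ$, yielding $\mathrm{BGL}^{\ast,\ast}(X/U)/(\beta+1)\mathrm{BGL}^{\ast,\ast}(X/U)\cong K^{TT}_{-\ast,Z}(X)\otimes_{\ZZ}\ZZ=K^{TT}_{-\ast,Z}(X)$; exactness of the quotient long exact sequence is automatic because $-\otimes_{\ZZ}\ZZ$ is the identity. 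This is exactly~(\ref{BGLAndTTrings2}).

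The only point needing care — more bookkeeping than genuine obstacle — is the passage between the bigraded theory $\mathrm{BGL}^{\ast,\ast}$ and its single-graded collapse $\mathrm{BGL}^{\ast}$, and checking that forming the quotient by $(\beta+1)$ commutes with the structure maps of a cohomology theory on $\SmOp/k$; as indicated this reduces to the observation that reduction modulo $(\beta+1)$ of the free $\ZZ[\beta,\beta^{-1}]$-modules $\mathrm{BGL}^{\ast,\ast}(-)$ returns the original $K^{TT}$-theory degreewise, so exactness is preserved. Everything else is a direct substitution into the statements already quoted from \cite{BGL}, which is why the corollary is stated as an immediate consequence.
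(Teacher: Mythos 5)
Your argument is correct and follows essentially the same route as the paper: the paper obtains the corollary precisely by specializing the isomorphism $\mathrm{BGL}^{\ast,0}(A)[\beta,\beta^{-1}]\cong\mathrm{BGL}^{\ast,\ast}(A)$ together with the multiplicative identification $Ad\colon K^{TT}_{-\ast}\to\mathrm{BGL}^{\ast,0}$ to $A=X/(X\minus Z)$ and then reducing modulo $(\beta+1)$, citing \cite[Cor.~1.3.6]{BGL} for the details. Your write-up merely spells out the degree bookkeeping and the fact that the quotient by $(\beta+1)$ returns $K^{TT}_{-\ast,Z}$ degreewise, which is exactly the content of that reference.
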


We refer to
\cite{UniThm}
for a construction of the commutative
$\Pro^1$-ring spectrum
$\mathrm{MGL}$.
For the purposes of the work to be presented we will need to know
only two properties of $\mathrm{MGL}$, which we refer to as
Quillen universality and $\mathrm{BGL}$-cellularity
(see Section
\ref{AgeneralResult}
below).

\subsection{Oriented commutative ring spectra}
Following Adams and Morel we define an orientation of a commutative
$\Pro^1$-ring spectrum. However we prefer to use Thom classes instead of
Chern classes. Consider the pointed motivic space
$\Pro^{\infty}= \colim_{n\geq 0} \Pro^n$ having base point
$g_1 \colon S = \Pro^0 \hra \Pro^\infty$.

The tautological ``vector bundle''
$\mathcal{T}(1) = \mathcal{O}_{\Pro^\infty}(-1)$ is also known as the
Hopf bundle. It has zero section $z\colon \Pro^\infty \hra \mathcal{T}(1)$.
The fiber over the point
$g_1 \in \Pro^{\infty}$
is
$\mathbb{A}^1$.
For a vector bundle $V$ over a smooth
$k$-scheme $X$, with zero section $z\colon X \hra V$,
let the {\em Thom space\/} $\Th(V)$ of $V$ be
the Nisnevich sheaf associated to
the presheaf
$Y \mapsto V(Y)/\bigl(V\minus z(X)\bigr)(Y)$
on the Nisnevich site
$\Sm/k$.
In particular $\Th(V)$ is a pointed motivic space in the
sense of
\cite[Defn.~A.1.1]{BGL}.
It coincides with Voevodsky's Thom space \cite[p. 422]{V1},
since
$\Th(V)$
is already a Nisnevich sheaf.
The Thom space of the Hopf bundle is then defined as the colimit
$\Th(\mathcal{T}(1)) = \colim_{n\geq 0} \Th\bigl(\mathcal{O}_{\Pro^n}(-1)\bigr)$.
Abbreviate $T =\Th(\Aff^1_S)$.

Let $E$ be a commutative $\Pro^1$-ring spectrum. The unit gives rise
to an element
$1\in E^{0,0}(\Spec(k)_+)$.
Applying the
$\Pro^1$-suspension
isomorphism to that element we get an element
$\Sigma_{\Pro^1}(1) \in E^{2,1}(\Pro^1,\infty)$.
The canonical covering of $\Pro^1$ defines
motivic weak equivalences
\[ \xymatrix{\Pro^1 \ar[r]^-\sim & \Pro^1/\Aff^1 &
\Aff^1/\Aff^1\minus \lbrace 0 \rbrace = T \ar[l]_-\sim} \]
of pointed motivic spaces inducing isomorphisms
\[ 
E(\Pro^1,\infty) \leftarrow E(\Aff^1/\Aff^1\minus \lbrace 0 \rbrace) \rightarrow  
E(T).\]
Let
$\Sigma_{T}(1)$
be the image of
$\Sigma_{\Pro^1}(1)$
in
$E^{2,1}(T)$.

\begin{definition}
\label{OrientationViaThom}
Let $E$ be a commutative ring $\Pro^1$-spectrum.
A {\em Thom orientation of\/}
$E$ is an element
$th \in E^{2,1}(\Th(\mathcal T(1))$
such that its restriction to the Thom space of the fibre over the distinguished
point coincides with the element
$\Sigma_{T}(1) \in E^{2,1}(T)$.
A {\em Chern orientation of\/}
$E$ is an element
$c \in E^{2,1}(\Pro^{\infty})$
such that
$c|_{\Pro^1}= - \Sigma_{\Pro^1}(1)$.
{\em An orientation} of $E$ is either a Thom orientation or a Chern orientation.
One says that a Thom orientation $th$ of $E$ coincides
with a Chern orientation $c$ of $E$ provided that
$c = z^*(th)$ or equivalently
the element
$th$ coincides with the one
$th(\mathcal O(-1))$
given by
(\ref{ThomClass})
below.
\end{definition}


\begin{remark}
\label{ThomAndChern}
The element $th$ should be regarded as the Thom class of the tautological
line bundle
$\mathcal{T}(1)= \mathcal O(-1)$
over
$\Pro^{\infty}$.
The element $c$ should be regarded as the Chern class of the tautological
line bundle
$\mathcal{T}(1)= \mathcal O(-1)$
over
$\Pro^{\infty}$.
\end{remark}


\begin{example}
\label{OrientationsOfMGLandK}
The following orientations given right below are relevant for our work.
Here $\mathrm{MGL}$ denotes the $\Pro^1$-ring spectrum
representing algebraic cobordism obtained in
\cite[Defn 2.1.1]{UniThm}
and $\mathrm{BGL}$ denotes the $\Pro^1$-ring spectrum
representing algebraic $K$-theory
constructed in
\cite[Theorem 2.2.1]{BGL}.
\begin{itemize}
\item
Let
$u_1: \Sigma^{\infty}_{\Pro^1}(\Th(\mathcal{T}(1)))(-1) \to \mathrm{MGL}$
be the canonical map of
$\Pro^1$-spectra. Set
$th^{\mathrm{MGL}} =u_1 \in \mathrm{MGL}^{2,1}(\Th(\mathcal{T}(1)))$.
Since
$th^{\mathrm{MGL}}|_{\Th(\mathbf{1})}= \Sigma_{\Pro^1}(1)$
in
$\mathrm{MGL}^{2,1}(\Th(\mathbf{1}))$,
the class
$th^{\mathrm{MGL}}$
is an orientation of
$\mathrm{MGL}$.
\item
Set
$c^{\mathrm{BGL}} =(- \beta) \cup ([\mathcal O]-[\mathcal O(1)])
\in \mathrm{BGL}^{2,1}(\Pro^{\infty})$.
The relation
(11)
from
\cite{BGL}
shows that
the class $c$ is an orientation of
$\mathrm{BGL}$.
Let
$th^{\mathrm{BGL}}$
be the equivalent Thom orientation.
\end{itemize}
\end{example}


\section{Oriented cohomology theories}
\label{OrientedSpectraAndTheory}

Let $(E,c)$ be an oriented commutative $\Pro^1$-ring spectrum
(See \ref{OrientationViaThom}). In this section we compute
the $E$-cohomology of infinite
Grassmannians. The results
are the expected ones -- see Theorem
\ref{CohomologyOfGr}.

The oriented $\Pro^1$-ring spectrum $(E,c)$ defines an oriented cohomology theory on
$\SmOp/k$ in the sense of
\cite[Defn.~3.1]{PSorcoh}
as follows.
The restriction of the functor $E^{\ast,\ast}$ to the category
$\SmOp/k$ is a ring cohomology theory.
By
\cite[Th.~3.35]{PSorcoh}
it remains to construct
a Chern structure on
$E^{\ast,\ast}|_{\SmOp/k}$
in the sense of
\cite[Defn.3.2]{PSorcoh}.
Let
$\mathrm{H}_\bullet(k)$
be the homotopy category of pointed motivic spaces over $k$.
The functor isomorphism
$\Hom_{\mathrm{H_\bullet}(k)}(- , \Pro^{\infty}) \to \mathrm{Pic}(-)$
on the category
$\Sm/k$
provided by
\cite[Thm.~4.3.8]{MV}
sends the class of the identity map
$\Pro^\infty \to {\Pro^{\infty}}$ to the class of the tautological line bundle
$\mathcal{O}(-1)$
over
$\Pro^{\infty}$.
For a line bundle $L$ over
$X\in \Sm/k$ let
$[L]$ be the class of $L$ in the group
$\mathrm{Pic}(X)$.
Let
$f_L\colon X \to \Pro^{\infty}$
be the morphism in
${\mathrm{H}_\bullet(k)}$ corresponding to
the class $[L]$
under the functor isomorphism above.
For a line bundle $L$ over $X\in \Sm/k$
set
$c(L)=f^\ast_L(c) \in E^{2,1}(X)$.
Clearly,
$c(\mathcal{O}(-1))=c$.
The assignment
$L/X \mapsto c(L)$
is a Chern structure on
$E^{\ast,\ast}|_{\SmOp/k}$
since
$c|_{\Pro^1}= - \Sigma_{\Pro^1}(1) \in E^{2,1}(\Pro^1,\infty)$.
With that Chern structure
$E^{\ast,\ast}|_{\SmOp/k}$
is an oriented ring cohomology theory
in the sense of
\cite{PSorcoh}.
In particular,
$(\mathrm{BGL},c^{\mathrm{BGL}})$
defines an oriented ring cohomology theory on
$\SmOp/k$.

Given this Chern structure, one
obtains a theory of Thom classes
$V/X \mapsto th(V) \in E^{2\mathrm{rank}(V),\mathrm{rank}(V)}\bigl(\Th_X(V)\bigr)$
on the cohomology theory
$E^{\ast,\ast}|_{\SmOp/k}$
in the sense of
\cite[Defn.~3.32]{PSorcoh} as follows.
There is a unique theory of Chern classes
$V \mapsto c_i(V) \in E^{2i,i}(X)$
such that for every line bundle $L$ on $X$ one has
$c_1(L)=c(L)$. For a rank $r$ vector bundle
$V$ over $X$ consider the vector bundle
$W:= {\mathbf{1}} \oplus V$
and the associated projective vector bundle
$\Pro(W)$
of lines in $W$.
Set
\begin{eqnarray}
\label{ThomBarClass}
\bar th(V)= c_r(p^\ast(V) \otimes \mathcal{O}_{\Pro(W)}(1)) \in E^{2r,r}(\Pro(W)).
\end{eqnarray}
It follows from
\cite[Cor.~3.18]{PSorcoh}
that the support extension map
\[E^{2r,r}\bigl(\Pro(W)/(\Pro(W)\smallsetminus \Pro(\mathbf{1}))\bigr)
\to E^{2r,r}\bigl(\Pro(W)\bigr)\]
is injective and
$\bar th(E) \in E^{2r,r}\bigl(\Pro(W)/(\Pro(W)\smallsetminus \Pro(\mathbf{1}))\bigr) $.
Set
\begin{eqnarray}
\label{ThomClass}
th(E)= j^\ast(\bar th(E)) \in E^{2r,r}\bigl(\Th_X(V)\bigr),
\end{eqnarray}
where
$j\colon \Th_X(V) \to \Pro(W)/ (\Pro(W) \smallsetminus \Pro({\bf 1}))$
is the canonical motivic weak equivalence of pointed motivic spaces
induced by the open embedding $V\hra \Pro(W)$.
The assignment $V/X \mapsto th(V)$ is a theory of Thom classes
on the cohomology theory $E^{\ast,\ast}|_{\SmOp/k}$
by the proof of
\cite[Thm.~3.35]{PSorcoh}. So the Thom classes are natural,
multiplicative and satisfy the following Thom isomorphism property.

\begin{theorem}
\label{ThomIsomorphism}
For a rank $r$ vector bundle
$p\colon V \to X$
on $X\in \Sm/k$ with zero section $z\colon X\hra V$, the map
\[- \cup th(V)\colon  E^{\ast,\ast}(X) \to E^{\ast+2r,\ast+r}\bigl(V/(V\minus z(X))\bigr) \]
is an isomorphism of two-sided
$E^{\ast,\ast}(X)$-modules,
where
$-\cup th(V)$
is written for the composition map
$\bigl(-\cup th(V)\bigr) \circ p^\ast$.
\end{theorem}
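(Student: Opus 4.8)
The plan is to identify the map $\theta_V:=(-\cup th(V))\circ p^*$ with the isomorphism furnished by the projective bundle theorem together with the localization sequence; this is precisely the argument proving the Thom isomorphism part of \cite[Thm.~3.35]{PSorcoh}, and all the facts used below are available for the oriented theory $E^{\ast,\ast}|_{\SmOp/k}$ set up above. Write $W=\mathbf{1}\oplus V$, $p\colon\Pro(W)\ra X$ for the projective bundle of lines in $W$, $\zeta=c_1(\mathcal O_{\Pro(W)}(1))\in E^{2,1}(\Pro(W))$, and let $\Pro_\infty:=\Pro(V)\hra\Pro(W)$ be the hyperplane bundle at infinity (a projective bundle of fibre dimension $r-1$ over $X$), whose open complement is the total space of $V$. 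Under the open embedding $V\hra\Pro(W)$ the zero section $z(X)$ becomes the section $\Pro(\mathbf{1})\hra\Pro(W)$, and $j\colon\Th_X(V)\xra{\sim}\Pro(W)/(\Pro(W)\minus\Pro(\mathbf{1}))$ of~(\ref{ThomClass}) is the induced weak equivalence, compatible with the projections to $X$.

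First I would apply the projective bundle theorem of \cite{PSorcoh} twice: $E^{\ast,\ast}(\Pro(W))$ is free over $E^{\ast,\ast}(X)$ with basis $1,\zeta,\dots,\zeta^r$, and $E^{\ast,\ast}(\Pro_\infty)$ is free with basis $1,\xi,\dots,\xi^{r-1}$, where $\xi=c_1(\mathcal O_{\Pro(V)}(1))$ is the restriction of $\zeta$ to $\Pro_\infty$. Since $\Pro(W)\minus\Pro(\mathbf{1})$ is an affine bundle over $\Pro_\infty$, restriction along $\Pro_\infty\hra\Pro(W)$ identifies the localization long exact sequence of the closed pair with $\cdots\ra E^{\ast,\ast}\bigl(\Pro(W)/(\Pro(W)\minus\Pro(\mathbf{1}))\bigr)\ra E^{\ast,\ast}(\Pro(W))\xra{i^*}E^{\ast,\ast}(\Pro_\infty)\ra\cdots$; here $i^*(\zeta^m)=\xi^m$, so $i^*$ is surjective and the sequence breaks into short exact sequences in every bidegree (consistently with the injectivity of the support extension map, \cite[Cor.~3.18]{PSorcoh}), whence $E^{\ast,\ast}\bigl(\Pro(W)/(\Pro(W)\minus\Pro(\mathbf{1}))\bigr)\cong\ker(i^*)$ as $E^{\ast,\ast}(X)$-modules.

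The crux is to identify $\ker(i^*)$. By the splitting principle and the Whitney sum formula, $\bar th(V)=c_r(p^*V\otimes\mathcal O_{\Pro(W)}(1))$ of~(\ref{ThomBarClass}) is the monic degree-$r$ Chern polynomial of $V$ evaluated at $\zeta$, that is, $\bar th(V)=\zeta^r+p^*c_1(V)\zeta^{r-1}+\cdots+p^*c_r(V)$. Restricting to $\Pro_\infty=\Pro(V)$, where $\mathcal O_{\Pro(W)}(1)$ becomes $\mathcal O_{\Pro(V)}(1)$, turns this into the defining relation $\xi^r+p^*c_1(V)\xi^{r-1}+\cdots+p^*c_r(V)=0$ of the projective bundle $\Pro(V)$, so $i^*\bigl(\bar th(V)\bigr)=0$. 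Since $1,\zeta,\dots,\zeta^{r-1},\bar th(V)$ arises from $1,\zeta,\dots,\zeta^r$ by a unitriangular base change over $E^{\ast,\ast}(X)$ it is again a basis of $E^{\ast,\ast}(\Pro(W))$, and $i^*$ sends its first $r$ members to the basis $1,\xi,\dots,\xi^{r-1}$; hence $\ker(i^*)=E^{\ast,\ast}(X)\cdot\bar th(V)$ is free of rank one, and $a\mapsto p^*a\cup\bar th(V)$ is an isomorphism $E^{\ast,\ast}(X)\xra{\cong}E^{\ast,\ast}\bigl(\Pro(W)/(\Pro(W)\minus\Pro(\mathbf{1}))\bigr)$.

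Finally I would transport this along the isomorphism $j^*$, using $th(V)=j^*\bar th(V)$ from~(\ref{ThomClass}) and the $X$-compatibility of $j$: this yields that $\theta_V=(-\cup th(V))\circ p^*$ is an isomorphism $E^{\ast,\ast}(X)\xra{\cong}E^{\ast+2r,\ast+r}\bigl(V/(V\minus z(X))\bigr)$. As $(E,c)$ is oriented, $E^{\ast,\ast}$ is graded commutative by \cite{PY}, so the left and right $E^{\ast,\ast}(X)$-module structures coincide and $\theta_V$ is an isomorphism of two-sided modules. I expect the main obstacle to be the Chern-class identity in the crux step: one has to pin down the various twist and sign conventions so that $c_r(p^*V\otimes\mathcal O_{\Pro(W)}(1))$ restricts along the hyperplane at infinity to exactly the projective bundle relation — the rest is the standard oriented-theory package of \cite{PSorcoh}.
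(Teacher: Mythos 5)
Your overall strategy is the right one, and it is in fact the argument the paper silently delegates: the paper's own ``proof'' is just a citation of \cite[Defn.~3.32.(4)]{PSorcoh}, the Thom isomorphism being part of the definition of a theory of Thom classes, with the fact that $V/X\mapsto th(V)$ is such a theory quoted from the proof of \cite[Thm.~3.35]{PSorcoh}. So you are reconstructing the cited Panin--Smirnov argument (projective bundle theorem for $\Pro(W)$ and $\Pro(V)$, homotopy invariance for $\Pro(W)\minus\Pro(\mathbf 1)\to\Pro(V)$, localization, then transport along $j^*$), and that skeleton is sound.

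However, the crux step as you state it is genuinely wrong for the theories this paper cares about. In a general oriented theory the first Chern class is not additive on tensor products of line bundles; it obeys the formal group law $\mathbb F_{E,th}$. Hence $\bar th(V)=c_r\bigl(p^\ast V\otimes\mathcal O_{\Pro(W)}(1)\bigr)$ is $\prod_i \mathbb F_{E,th}(\lambda_i,\zeta)$ in Chern roots, \emph{not} $\zeta^r+p^\ast c_1(V)\zeta^{r-1}+\cdots+p^\ast c_r(V)$, and likewise ``the defining relation of $\Pro(V)$'' is not that Chern polynomial. This already fails for $E=\mathrm{BGL}$ with the orientation used here, whose formal group law is $X+Y+\beta^{-1}XY$; so the issue is not ``twist and sign conventions'' as you suggest at the end, but the formal group law itself. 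The two facts you actually need survive, but require different justifications: (i) $i^\ast\bar th(V)=0$ because on $\Pro(V)$ the bundle $q^\ast V\otimes\mathcal O_{\Pro(V)}(1)$ has the tautological nowhere-vanishing section $\mathcal O(-1)\hra q^\ast V$, so the Whitney formula gives $c_r=0$ (this, together with \cite[Cor.~3.18]{PSorcoh}, is exactly what lets the paper define $th(V)$ at all); and (ii) writing $\bar th(V)=\sum_{j=0}^{r}b_j\zeta^j$ in the basis $1,\zeta,\dots,\zeta^r$, the top coefficient $b_r\in E^{0,0}(X)$ is $1$ plus an element of the ideal generated by $c_1(V),\dots,c_r(V)$ (set the Chern roots to zero to see the constant term), and since Chern classes of bundles on a finite-dimensional smooth scheme are nilpotent, $b_r$ is a unit. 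The base change from $1,\zeta,\dots,\zeta^r$ to $1,\zeta,\dots,\zeta^{r-1},\bar th(V)$ is therefore triangular with invertible diagonal rather than unitriangular, and with that correction your identification $\ker(i^\ast)=E^{\ast,\ast}(X)\cdot\bar th(V)$, and hence the Thom isomorphism, goes through; the two-sided module statement then follows from graded commutativity \cite{PY} as you say.
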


\begin{proof}
  See \cite[Defn.~3.32.(4)]{PSorcoh}.
\end{proof}


Let $\Gr(n,n+m)$ be the Grassmann scheme of $n$-dimensional linear
subspaces of $\Aff^{n+m}_S$. The closed embedding
$\Aff^{n+m} = \Aff^{n+m}\times \{0\} \hra \Aff^{n+m+1}$
defines a closed embedding
\begin{equation}
\label{eq:1}
\Gr(n,n+m)\hra \Gr(n,n+m+1).
\end{equation}
The tautological vector bundle
is denoted
$\mathcal{T}(n,n+m)\to \Gr(n,n+m)$.
The closed embedding~(\ref{eq:1})
is covered by a map of vector bundles
$\mathcal{T}(n,n+m)\hra \mathcal{T}(n,n+m+1)$.
Let
$\Gr(n) = \colim_{m\geq 0} \Gr(n,n+m)$, $\mathcal{T}(n) = \colim_{m\geq 0} \mathcal{T}(n,n+m)$
and
$\Th(\mathcal{T}(n)) = \colim_{m\geq 0} \Th(\mathcal{T}(n,n+m))$.
These colimits are taken in the category of motivic spaces over $S$.

\begin{remark}
\label{FiniteGrassmannians}
It is not difficult to prove that 
$E^{\ast,\ast}(\Gr(n,n+m))$
is multiplicatively generated by the Chern classes 
$c_i(\mathcal{T}(n,n+m))$ 
of the tautological vector bundle
$\mathcal{T}(n,n+m)$. This proves the surjectivity of the pull-back maps
$E^{\ast,\ast}(\Gr(n,n+m+1)) \to E^{\ast,\ast}(\Gr(n,n+m))$ 
and shows that the canonical map
$E^{\ast,\ast}(\Gr(n)) \to {\varprojlim}E^{\ast,\ast}(\Gr(n,n+m))$
is an isomorphism. Thus for each $i$ there exists a unique element 
$c_i= c_i(\mathcal T(n)) \in E^{2i,i}(\Gr(n))$
which for each $m$ restricts to the element 
$c_i(\mathcal{T}(n,n+m))$ under the obvious pull-back map. 
\end{remark}

\begin{theorem}
\label{CohomologyOfGr}
Let
$c_i= c_i(\mathcal T(n)) \in E^{2i,i}(\Gr(n))$
be the element defined just above. 
Then
$$
E^{\ast,\ast}(\Gr(n))= E^{\ast,\ast}(k)[[c_1,c_2, \dots, c_n]]
$$
is the formal power series on the $c_i$'s.
The inclusion
$i\colon \Gr(n) \hra \Gr(n+1)$
satisfies
$i^\ast(c_m)=c_m$
for $m < n+1$ and
$i^\ast(c_{n+1})=0$.
\end{theorem}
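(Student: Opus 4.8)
The plan is to prove the displayed identity by induction on $n$, the case $n=1$ being $\Gr(1)=\Pro^\infty$, where $E^{\ast,\ast}(\Pro^\infty)=E^{\ast,\ast}(k)[[c_1]]$ results from taking the inverse limit of the projective bundle isomorphisms $E^{\ast,\ast}(\Pro^m)=E^{\ast,\ast}(k)[c_1]/(c_1^{m+1})$ exactly as in Remark~\ref{FiniteGrassmannians}. Write $R=E^{\ast,\ast}(k)$ and $A=E^{\ast,\ast}(\Gr(n))$. The inductive vehicle is the complete flag bundle of the tautological bundle: for $N\ge n$ let $\mathrm{Fl}(N)$ be the variety of flags $V_1\subset\cdots\subset V_n\subset\Aff^N$ with $\dim V_i=i$, carrying tautological line bundles $\mathcal{L}_i=V_i/V_{i-1}$ and classes $y_i=c_1(\mathcal{L}_i)$, and put $\mathrm{Fl}(\infty)=\colim_N\mathrm{Fl}(N)$ with its projection $\pi\colon\mathrm{Fl}(\infty)\to\Gr(n)$.

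I would then compute $B:=E^{\ast,\ast}(\mathrm{Fl}(\infty))$ in two ways. The map $\mathrm{Fl}(N)\to\Gr(n,N)$ is an $(n-1)$-fold iterated projective bundle of finite-rank bundles (choose $V_1\subset\mathcal{T}(n,N)$, then $V_2/V_1\subset\mathcal{T}(n,N)/V_1$, and so on, the tautological sub-line bundle at step $i$ being $\mathcal{L}_i$), so iterating the projective bundle theorem of \cite{PSorcoh} — with the first Chern class of the tautological sub-line bundle as the fibrewise generator — presents $E^{\ast,\ast}(\mathrm{Fl}(N))$ as a free $E^{\ast,\ast}(\Gr(n,N))$-module on the $n!$ monomials $\{y_1^{a_1}\cdots y_{n-1}^{a_{n-1}}:0\le a_i\le n-i\}$, one of which is $1$. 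Passing to the inverse limit over $N$ (the transition maps are surjective since the $y_i$ restrict to the $y_i$, so $E^{\ast,\ast}$ of the colimit is the limit as in Remark~\ref{FiniteGrassmannians}, and a finite direct sum commutes with the limit) exhibits $B$ as free over $A$ on that same basis; in particular $\pi^\ast\colon A\to B$ is split injective. On the other hand $\mathrm{Fl}(N)\to\Spec k$ is an $n$-fold iterated projective bundle, so $E^{\ast,\ast}(\mathrm{Fl}(N))=R[y_1,\dots,y_n]$ modulo relations whose weights tend to infinity with $N$, and passing to the limit gives $B=R[[y_1,\dots,y_n]]$. Finally, applying the Whitney sum formula of \cite{PSorcoh} to the tautological filtration $V_1\subset\cdots\subset V_n=\pi^\ast\mathcal{T}(n)$ gives $\pi^\ast c_j(\mathcal{T}(n))=e_j(y_1,\dots,y_n)$, the elementary symmetric polynomials.

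To conclude I would regard $A$ as a subring of $B=R[[y_1,\dots,y_n]]$ via $\pi^\ast$ and let $C\subseteq B$ be the closed $R$-subalgebra generated by $e_1,\dots,e_n$; by the fundamental theorem of symmetric functions (after graded completion) $C$ is a power series ring freely generated over $R$ by the $e_j$, and classically $B$ is free over $C$ on the very same monomials $\{y_1^{a_1}\cdots y_{n-1}^{a_{n-1}}:0\le a_i\le n-i\}$. Since $\pi^\ast c_j(\mathcal{T}(n))=e_j$ we get $C\subseteq A$; conversely, for $a\in A$ one expands $a\in B$ in this basis, and uniqueness of the coefficients over $A$ forces the expansion to be $a\cdot 1$, whereas expanding the same element over $C$ places every coefficient, in particular $a$ itself, in $C$, so $A\subseteq C$. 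Hence $A=C$, and under $\pi^\ast$ the Chern classes $c_j(\mathcal{T}(n))$ correspond to the $e_j$, which gives $E^{\ast,\ast}(\Gr(n))=E^{\ast,\ast}(k)[[c_1,\dots,c_n]]$. For the last assertion, on finite levels $i\colon\Gr(n)\hra\Gr(n+1)$ is induced by $W\mapsto W\oplus\Aff^1$, so $i^\ast\mathcal{T}(n+1)\cong\mathcal{T}(n)\oplus\mathbf{1}$; multiplicativity of the total Chern class then gives $i^\ast c(\mathcal{T}(n+1))=c(\mathcal{T}(n))$, and as $\mathcal{T}(n)$ has rank $n$ this yields $i^\ast c_m=c_m$ for $m\le n$ and $i^\ast c_{n+1}=c_{n+1}(\mathcal{T}(n))=0$.

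The main obstacle is not geometric — the only geometric inputs are the projective bundle theorem and the Chern class formalism of \cite{PSorcoh} — but the bookkeeping of inverse limits and completions: one must know that $E^{\ast,\ast}$ of the ind-varieties in play equals the inverse limit of its finite stages (true for Grassmannians by Remark~\ref{FiniteGrassmannians}, and proved identically for the flag bundles because the relevant restriction maps are surjective), and one must check that the classical commutative algebra used in the last step — the monomial basis of $\ZZ[y_1,\dots,y_n]$ over $\ZZ[e_1,\dots,e_n]$ and the formal independence of the $e_j$ — survives graded completion. Care is also needed in the first computation to use the $c_1$ of the tautological sub-line bundle (rather than its dual) as the projective-bundle generator, so that it is literally the $y_i$, and not their formal inverses, that appear.
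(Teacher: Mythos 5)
Your argument is correct in outline, but note that the paper itself gives no proof of Theorem \ref{CohomologyOfGr}: it simply cites \cite[Thm.~2.0.6]{UniThm}, where the computation is carried out on finite Grassmannians (the route hinted at in Remark \ref{FiniteGrassmannians}: $E^{\ast,\ast}(\Gr(n,n+m))$ is generated by the Chern classes of $\mathcal{T}(n,n+m)$ via the projective bundle theorem, restriction maps are surjective, and one passes to $\varprojlim$). Your route is genuinely different and self-contained: you run the splitting principle through the ind-flag bundle $\mathrm{Fl}(\infty)\to\Gr(n)$, compute $E^{\ast,\ast}(\mathrm{Fl}(\infty))$ once as a free module of rank $n!$ over $E^{\ast,\ast}(\Gr(n))$ and once as $E^{\ast,\ast}(k)[[y_1,\dots,y_n]]$, and then identify $E^{\ast,\ast}(\Gr(n))$ with the closed subalgebra of symmetric power series via the Whitney formula and the classical monomial basis of $\ZZ[y_1,\dots,y_n]$ over $\ZZ[e_1,\dots,e_n]$; the restriction statement via $i^\ast\mathcal{T}(n+1)\cong\mathcal{T}(n)\oplus\mathbf{1}$ is exactly right. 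What your approach buys is a single uniform argument that avoids any induction over Schubert-type bases on finite Grassmannians (indeed your advertised induction on $n$ is never really used beyond the base case $\Gr(1)=\Pro^\infty$); what it costs is that you must justify, for the ind-flag varieties as well as for $\Gr(n)$, that $E^{\ast,\ast}$ of the colimit is the inverse limit of the finite stages (the same Milnor-sequence point the paper waves at in Remark \ref{FiniteGrassmannians}), and that the two freeness statements survive the weight-completion -- both of which you flag and both of which are standard, so I see no genuine gap, only bookkeeping to be written out at the same level of detail as the cited reference.
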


\begin{proof}
See
\cite[Thm. 2.0.6]{UniThm}.
\end{proof}

\begin{remark}
\label{FirstChernOfK}
For a smooth variety $X$ and a vector bundle $E$ over $X$ the class $c_1(E)$
is additive with respect to short exact sequences of vector bundles. Thus
it defines a homomorphism
$K^{TT}_0(X) \to E^{2,1}(X)$.
Moreover that homomorphism is natural in $X$.
For an element
$\alpha \in K^{TT}_0(X)$
we will write
$c_1(\alpha)$
for the image of
$\alpha$.
Now take the space
$\Gr(n)$ and recall that the map
$
\mathbb{K}^{TT}_0(\Gr(n)) \to {\varprojlim}K^{TT}_0(\Gr(n,n+m))
$
is an isomorphism by 
\cite[Sect. 1.2]{BGL}
and the map
$E^{\ast}(\Gr(n)) \to {\varprojlim}E^{\ast}(\Gr(n,n+m))$
is an isomorphism by Remark
\ref{FiniteGrassmannians}. 
Thus we have a homomorphism
$\mathbb{K}^{TT}_0(\Gr(n)) \to E^{2,1}(\Gr(n))$.
In the same way we may get a homomorphism
$$
\mathbb{K}^{TT}_0(\mathbb Z \times \Gr) \to E^{2,1}(\mathbb Z \times \Gr),
$$
where
$\Gr = \colim_{n\geq 0} \Gr(n)$. For an element
$\alpha \in \mathbb{K}^{TT}_0(\mathbb Z \times \Gr)$
we will write
$c_1(\alpha)$
for its image in
$E^{2,1}(\mathbb Z \times \Gr)$.
\end{remark}

\subsection{Universality and cellularity}
\label{AgeneralResult}
The main result of this Section is Theorem
\ref{AlmostlyMain}.

\begin{definition}[Universality Property]
\label{UniversalityDefn}
Let
$(\mathrm{U},\mathrm{u})$
be an oriented
commutative ring
$\Pro^1$-spectrum over $S$.
We say that
$(\mathrm{U},\mathrm{u})$
is  {\em Quillen universal\/}  if
for every commutative ring
$\Pro^1$-spectrum $E$ over $S$
the assignment
$\varphi \mapsto \varphi(\mathrm{u}) \in \mathrm{U}^{2,1}(\Th(\mathcal T(1)))$
identifies the set of monoid homomorphisms
\begin{eqnarray}
\label{OrientationMap}
\varphi\colon \mathrm{U} \to \mathrm{E}
\end{eqnarray}
in the motivic stable homotopy category
$\mathrm{SH}(S)$
with the set of orientations of $\mathrm{E}$.
\end{definition}

\begin{remark}
The Universality Theorem
(\cite{Vez}
or
\cite{UniThm}) implies that
the
$\Pro^1$-spectrum
$\mathrm{MGL}$, equipped with its canonical
orientation
$th^{\mathrm{MGL}}$
from
\ref{OrientationsOfMGLandK},
is Quillen universal.
\end{remark}

If $E$ is a commutative $\Pro^1$-ring spectrum over $k$ and
$A$ is a pointed motivic space over $k$, $E^{\ast}(A)$ is
an $E^0(k)$-module in a natural way. A monoid homomorphism $\phi\colon E_1 \to E_2$
induces an $E^0(k)$-module homomorphism $E_1^\ast(A) \to E_2^\ast(A)$.
In particular, if
$(\mathrm{U},\mathrm{u})$
is a Quillen universal oriented commutative ring
$\Pro^1$-spectrum over $k$
and
$(\mathrm{E},\mathrm{th})$ is an
oriented commutative ring
$\Pro^1$-spectrum over $k$, the
monoid homomorphism
\begin{eqnarray}
\label{MainMOrphism}
\varphi\colon \mathrm{U} \to \mathrm{E}
\end{eqnarray}
in
$\mathrm{SH}(k)$
induces the homomorphism
\begin{eqnarray}
\label{CFhomPeriodic}
\bar \varphi_{A}\colon \mathrm{U}^{\ast}(A) \otimes_{\mathrm{U}^{0}(k)}
\mathrm{E}^{0}(k) \to
\mathrm{E}^{\ast}(A)
\end{eqnarray}
and in particular the homomorphism
\begin{eqnarray}
\label{CFhomPeriodiczero}
\bar \varphi^0_{A}\colon \mathrm{U}^{0}(A) \otimes_{\mathrm{U}^{0}(k)}
\mathrm{E}^{0}(k) \to
\mathrm{E}^{0}(A).
\end{eqnarray}
Both are natural in $A$.

From now on we will insert
$(\mathrm{BGL}, \mathrm{th}^K)$
for
$(\mathrm{E},\mathrm{th})$
(see Example
\ref{OrientationsOfMGLandK}).
Set
\[  \bar {\mathrm{U}}^{\ast}(A) = 
\mathrm{U}^{\ast}(A) \otimes_{\mathrm{U}^{0}(k)} \mathrm{BGL}^{0}(k) \quad
\mathrm{and}\quad
\bar {\mathrm{U}}^{0}(A)  
\mathrm{U}^{0}(A) \otimes_{\mathrm{U}^{0}(k)} \mathrm{BGL}^{0}(k).\]

\begin{definition}[Weakly BGL-cellular]
\label{MGLWeaklyCellular}
Let $(\mathrm{U},u)$ be a Quillen universal
$\Pro^1$-ring spectrum, and let
$\bar\phi^0_A\colon \mathrm{U}^0(A) \otimes_{\mathrm{U}^0(k)} \mathrm{BGL}^0(k) \to \mathrm{BGL}^0(A)$
be the homomorphism induced by the orientation
$th^{\mathrm{BGL}}$ on $\mathrm{BGL}$
(see Example
\ref{OrientationsOfMGLandK}).
Then $(\mathrm{U},u)$ is
is called {\em weakly\/} $\mathrm{BGL}$-{\em cellular\/}
if there exists an integer $N$
such that
the map
$\bar \varphi^0_{\mathrm{U}_n}$
is an isomorphism for
$n \geq N$,
where
$\mathrm{U}_n$ is the $n$-th term of the $\Pro^1$-spectrum
$\mathrm{U}$.
\end{definition}

A pointed motivic space $A$ is called {\em small\/} if
the covariant functor
\[\Hom_{\SH(S)}(\Sigma^\infty_{\Pro^1} A,-) \] on
$\mathrm{SH}(S)$ commutes with arbitrary coproducts.

\begin{theorem}
\label{AlmostlyMain}
Let
$(\mathrm{U},\mathrm{u})$
be a Quillen universal oriented commutative
$\Pro^1$-ring spectrum over a field $k$.
Suppose
$(\mathrm{U},\mathrm{u})$
is weakly $\mathrm{BGL}$-cellular.
Then
the homomorphism
$\bar \varphi_A$
is an isomorphism for all small pointed motivic spaces $A$.
\end{theorem}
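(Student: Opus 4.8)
The plan is to reduce $\bar\varphi_A$, via the smallness of $A$ and the presentation of $\mathrm{U}$ and $\mathrm{BGL}$ by their terms, to the finite‑level data on which weak $\mathrm{BGL}$‑cellularity bears, and then to propagate the isomorphism over all small pointed motivic spaces by a cell induction. Two preliminary reductions are convenient. First, to the bottom graded piece: for a small pointed motivic space $B$ and $m\in\ZZ$ the space $\Sigma^{m,m}B$ (when $m\ge 0$) resp.\ $\Sigma^{-m,0}B$ (when $m<0$) is again small, and the suspension isomorphisms identify $\bar\varphi^m_B$, $\mathrm{U}^0(k)$‑linearly and compatibly with $\bar\varphi$, with $\bar\varphi^0$ of that suspension; hence $\bar\varphi_A=\bigoplus_m\bar\varphi^m_A$ is an isomorphism for every small $A$ as soon as $\bar\varphi^0_B$ is an isomorphism for every small $B$. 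Second, to the terms: since $\mathrm{BGL}$ is a $\Pro^1$‑$\Omega$‑spectrum with $\mathcal K_n=\mathcal K_0$ one has $\mathrm{BGL}^{2q,q}(B)=[B,\mathcal K_0]_{\mathrm{H}_\bullet(k)}$, while, $B$ being small, $\mathrm{U}^{2q,q}(B)=\colim_N[\Sigma^{2(N-q),N-q}\Sigma^\infty B,\Sigma^\infty\mathrm{U}_N]$; under these descriptions the $N$‑th component of the monoid morphism $\varphi\colon\mathrm{U}\to\mathrm{BGL}$ induces $\bar\varphi^0_B$, and on $B=\mathrm{U}_N$ it specialises, by naturality, to $\bar\varphi^0_{\mathrm{U}_N}$, an isomorphism for $N\ge N_0$ by weak $\mathrm{BGL}$‑cellularity (Definition~\ref{MGLWeaklyCellular}).

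It remains to pass from ``$\bar\varphi^0_{\mathrm{U}_N}$ is an isomorphism for $N\ge N_0$'' to ``$\bar\varphi^0_B$ is an isomorphism for all small $B$''. Let $\mathcal C$ be the class of small pointed motivic spaces $B$ with $\bar\varphi^0_B$ an isomorphism; by the first reduction this is also the class on which $\bar\varphi_B$ is an isomorphism, so $\mathcal C$ is closed under $\Pro^1$‑ and simplicial suspension, and one checks it is closed under finite wedges, retracts, and homotopy cofibre sequences. It contains $\Spec(k)_+$ (there $\bar\varphi^0$ is the canonical identification $\mathrm{U}^0(k)\otimes_{\mathrm{U}^0(k)}\mathrm{BGL}^0(k)=\mathrm{BGL}^0(k)$) and the spaces $\mathrm{U}_N$ for $N\ge N_0$; in the case $\mathrm{U}=\mathrm{MGL}$ one moreover puts Grassmannians and Thom spaces of vector bundles over smooth varieties into $\mathcal C$, using the Thom isomorphism of Theorem~\ref{ThomIsomorphism} and the computation $E^{\ast,\ast}(\Gr(n))=E^{\ast,\ast}(k)[[c_1,\dots,c_n]]$ of Theorem~\ref{CohomologyOfGr}. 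Since every small pointed motivic space is built from spaces in $\mathcal C$ by finitely many of these operations, the induction concludes the argument.

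The delicate point — and what I expect to be the real obstacle — is the exactness needed to close $\mathcal C$ under homotopy cofibre sequences. Running the five lemma on the long exact sequences of $\mathrm{U}^{\ast,\ast}$ and $\mathrm{BGL}^{\ast,\ast}$ requires the $\mathrm{U}^{\ast,\ast}$‑sequence to stay exact after $-\otimes_{\mathrm{U}^0(k)}\mathrm{BGL}^0(k)$, a motivic analogue of the Landweber‑exactness of $\ZZ[\beta,\beta^{-1}]$, which one does not have off the shelf (indeed the image of $\mathrm{U}^0(k)$ in $\mathrm{BGL}^0(k)$ need not be all of $\ZZ[\beta,\beta^{-1}]$). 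The way around should be to exploit that all cohomology modules arising in the induction are explicitly (pro‑)free over $\mathrm{U}^{\ast,\ast}(k)$ — via Theorem~\ref{CohomologyOfGr}, the Thom isomorphism and the cell structures of the spaces involved — so that the relevant sequences split and the tensor product is exact on them; alternatively one lifts the comparison to the level of $\Pro^1$‑spectra, where it suffices to verify the required vanishing on the compact generators, which reduces the whole problem to the building blocks already treated.
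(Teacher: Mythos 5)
Your reduction to the $(0,0)$-graded piece by suspension isomorphisms and your use of the colimit description $\mathrm{U}^{2q,q}(B)=\colim_N[\Sigma^{2(N-q),N-q}B,\mathrm{U}_N]$ for small $B$ both occur in the paper's proof, but the core of your argument --- the cell induction over a class $\mathcal C$ closed under cofibre sequences --- has two gaps that the workarounds you sketch do not repair. First, the induction is unfounded: a general small pointed motivic space (already $X_+$ or $X/U$ for a smooth $k$-variety $X$) is \emph{not} obtained from $\Spec(k)_+$, the terms $\mathrm{U}_N$, Grassmannians and Thom spaces by finitely many suspensions, wedges, retracts and cofibres; small objects of $\SH(k)$ are generated by (de)suspensions of smooth schemes, and whether those lie in $\mathcal C$ is exactly what the theorem asserts, so the induction is circular. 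Second, the exactness problem you flag is fatal for this route: for a general small $A$ the module $\mathrm{U}^{\ast,\ast}(A)$ is not free (or pro-free) over $\mathrm{U}^{\ast,\ast}(k)$, so the splitting argument is unavailable, and the functor $A\mapsto\mathrm{U}^{\ast}(A)\otimes_{\mathrm{U}^{0}(k)}\mathrm{BGL}^{0}(k)$ is not known to be representable or to take cofibre sequences to exact sequences, so one cannot ``verify on compact generators''. Indeed the paper deduces that $(X,U)\mapsto\mathrm{MGL}^{\ast}(X/U)\otimes_{\mathrm{MGL}^{0}(k)}\ZZ$ is a cohomology theory as a \emph{consequence} of the theorem, not as an input.

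The paper's proof avoids exactness altogether by constructing a natural section, which is the ingredient missing from your proposal. The class $c^{\mathrm{U}}_1(\infty-\tau_{\infty}^{\vee})\cup[\Pro^1]_{\mathrm{U}}\in\mathrm{U}^{0,0}(\mathbb Z\times\Gr)$ (with $[\Pro^1]_{\mathrm{U}}$ coming from the formal group law, and $[\Pro^1]_{\mathrm{BGL}}=-\beta^{-1}$) defines a map $s\colon\Sigma^{\infty}_{\Pro^1}(\mathbb Z\times\Gr)\to\mathrm{U}$ such that $\varphi\circ s$ is the adjoint of the weak equivalence $\mathbb Z\times\Gr\to\mathcal K_0$; since $\mathrm{BGL}$ is an $\Omega_{\Pro^1}$-spectrum this gives, for \emph{every} pointed $A$, a natural section $s_A\colon\mathrm{BGL}^{0,0}(A)=[A,\mathbb Z\times\Gr]\to\mathrm{U}^{0,0}(A)$ of $\varphi^{0,0}_A$, extended to a section $\bar s^0_A$ of $\bar\varphi^0_A$ by $a\cup\beta^i\mapsto s_A(a)\otimes\beta^i$. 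Surjectivity of $\bar\varphi^0_A$ is then automatic; injectivity for small $A$ uses weak $\mathrm{BGL}$-cellularity only through the single relation $(\bar s^0_{\mathrm{U}_n}\circ\varphi^0_{\mathrm{U}_n})([u_n])=[u_n]\otimes 1$, pulled back along the map $f$ that classifies a suspension of a given element via the colimit description, yielding $\Sigma^{2n,n}(a)\otimes 1=0$ and hence $a\otimes 1=0$. Thus universality plus cellularity is exploited to produce a section and a retraction-style injectivity argument, with no five lemma and no exactness of the tensored theory required; your proposal needs one of these mechanisms to be supplied before it can close.
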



\begin{proof}
The proof consists of several steps.
Our first aim is to prove that the homomorphisms
$\bar \varphi^0_{A}$
are isomorphisms, where $A$ is a small pointed motivic space.
First we construct a section of the natural
transformation
\[
\varphi^{0,0}\colon \mathrm{U}^{0,0} \to \mathrm{BGL}^{0,0}
\]
of functors on the category of 
small pointed motivic spaces.
Recall
that for every oriented commutative
$\Pro^1$-ring spectrum
$(E,th)$
the ring cohomology theory
$E^{\ast,\ast}|_{\SmOp/k}$
is an oriented cohomology theory on the category
$\SmOp/k$
(see Section
\ref{OrientedSpectraAndTheory}).
Let
$\mathbb{F}_{E,th}$
be the induced commutative formal group law
over the ring
$E^0(k)$.
Let
$\Omega$
be the complex cobordism ring
and let
$l_{E,th}\colon \Omega \to E^0(k)$
be the unique ring homomorphism
sending the universal formal group
$\mathbb{F}_{\Omega}$
to
$\mathbb{F}_{E,th}$.

Since
$c_1 \in E^{2,1}$
the coefficients $a_{ij}$ of the formal group law
$\mathbb{F}_{E,th}$
are in
$E^{-2(i+j-1),-(i+j-1)}(k)$. Thus the homomorphism
$l_{E,th}$
is grade preserving, that is, it takes
$\Omega^{2i}$ into $E^{2i,i}(k)$ for any $i$.

Set
\begin{eqnarray}
\label{ClassOfPro}
[\Pro^n]_{E}= l_{E,th}([\mathbb C \mathbb P^n]) \in E^{-2n,-n}(k),
\end{eqnarray}
where
$[\mathbb C \mathbb P^n]$
is the class of the complex projective space
$\mathbb C \mathbb P^n$
in
$\Omega$.
Although the class
$[\Pro^n]_{E}$
depends on the orientation class $th$, we
use the notation
$[\Pro^n]_{E}$
instead. If
$(E^{\prime},th^{\prime})$
is another oriented commutative
$\Pro^1$-ring spectrum and
$\psi\colon E \to E^{\prime}$
is a monoid homomorphism in the
category
$\mathrm{SH}(S)$
which preserves orientation classes, then it
sends the formal group law
$\mathbb{F}_{E,th}$
to
$\mathbb{F}_{E^{\prime},th^{\prime}}$.
In particular
$
\psi([\Pro^n]_{E})=[\Pro^n]_{E^{\prime}}.
$
Applying this observation to the monoid homomorphism
$\varphi$
one obtains
\[
\varphi([\Pro^1]_{\mathrm{U}})= [\Pro^1]_{\mathrm{BGL}}.
\]
To compute
$[\Pro^1]_{\mathrm{BGL}}$
recall that
the coefficient at
$XY$ in the formal group law
$\mathbb F_{\Omega}$
coincides with the class\
$-[\mathbb C\mathbb P^1]$
in
$\Omega$.
The formal group law
$\mathbb F_{\mathrm{BGL}}$
coincides with
$X+Y+\beta^{-1}XY$,
since
$c^{\mathrm{BGL}}(L)=([\bf 1]- [L^{\vee}])(- \beta)$.
Thus the equality
\[ [\Pro^1]_{\mathrm{BGL}}= - \beta^{-1} \]
holds.
{\em We are ready to construct a section}.

Let
$\Gr(n)$
be the pointed motivic space described right above Theorem
\ref{CohomologyOfGr}.
Set
$\Gr = \colim_{n\geq 0} \Gr(n)$.
Consider the unique element
$\infty - \tau_{\infty}^{\vee} \in \mathbb{K}^{TT}_0(\mathbb{Z} \times \mathrm{Gr})$
such that for each integer $m$ its restriction to the subspace
$\lbrace m\rbrace  \times \mathrm{Gr}(n,2n) \subseteq \mathbb{Z} \times \mathrm{Gr}$
coincides with the element
$-m+n-[\tau^{\vee}_n] \in K^{TT}_0(\mathrm{Gr}(n,2n))$
(compare with description of the element
$\xi_\infty=\tau_{\infty}-\infty$
in
\cite[Sect. 1.2]{BGL}).
Let
$c^{\mathrm{U}}_1(\infty - \tau_{\infty}^{\vee})$
be the image of
$\infty - \tau_{\infty}^{\vee}$
in
$E^{2,1}(\mathbb Z \times \mathrm{Gr})$
as described in Remark
\ref{FirstChernOfK}.
Consider the map
\begin{eqnarray}
\label{Section}
s\colon \Sigma_{\Pro^1}^{\infty}(\mathbb Z \times \mathrm{Gr}) \to \mathrm{U}
\end{eqnarray}
in the stable homotopy category category
$\mathrm{SH}(S)$
given by the element
\[
c^{\mathrm{U}}_1(\infty - \tau_{\infty}^{\vee}) \cup [\Pro^1]_{\mathrm{U}}
\in \mathrm{U}^{0,0}(\mathbb Z \times \mathrm{Gr}).
\]

\begin{claim}
\label{PhiOfTheClass}
One has
$\varphi(c^{\mathrm{U}}_1(\infty - \tau_{\infty}^{\vee}) \cup [\Pro^1]_{\mathrm{U}})=
\tau_{\infty}- \infty
\in \mathrm{BGL}^{0,0}(\mathbb Z \times \mathrm{Gr})$.
\end{claim}
In fact,
\begin{eqnarray*}
\varphi(c^{\mathrm{U}}_1(\infty - \tau_{\infty}^{\vee}) \cup [\Pro^1]_{\mathrm{U}}) & = &
c^{\mathrm{BGL}}_1(\infty - \tau_{\infty}^{\vee}) \cup [\Pro^1]_{\mathrm{BGL}} \\ &  = &
(\infty - \tau_{\infty}) \cup \beta \cup (- \beta^{-1})  \\
       & = & \tau_{\infty}- \infty
\end{eqnarray*}
Claim \ref{PhiOfTheClass} shows that the composition
\[ \varphi \circ s\colon \Sigma_{\Pro^1}^{\infty}(\mathbb Z \times \mathrm{Gr}) \to \mathrm{BGL} \]
coincides with the adjoint of the
motivic
weak equivalence
$w\colon \mathbb Z \times \mathrm{Gr} \to \mathcal K_0$
from Section
\ref{Introduction}.

This fact together with properties
$(ii)$ and $(iii)$ of
$\mathrm{BGL}$ mentioned in
Section
\ref{Introduction}
shows that for every
pointed motivic space $A$ the map
\[
s_{A}\colon \mathrm{BGL}^{0,0}(A)= [A, \K_0]=
[A, \mathbb Z \times \mathrm{Gr}] \to [\Sigma_{\Pro^1}^{\infty}(A), \mathrm{U}]=
\mathrm{U}^{0,0}(A)
\]
is a {\it section} of the map
$\varphi^{0,0}_{A}\colon \mathrm{U}^{0,0}(A) \to \mathrm{BGL}^{0,0}(A)$.
Moreover, the section
$s_{A}$
is natural in $A$.
Let 
\[\overline{\mathrm{U}}^\ast(A) = 
\mathrm{U}(A)\tensor_{\mathrm{U}^0(k)}\mathrm{BGL}^0(k)\] 
and
recall that $\bar \phi_A\colon  \overline{\mathrm{U}}^\ast(A) \to \mathrm{BGL}^\ast(A)$
is the induced $\mathrm{BGL}^0(k)$-module homomorphism.
To extend the section $s$ to a section
$\bar s^0\colon \mathrm{BGL}^0 \to \overline{\mathrm{U}}^0$
of the natural transformation
$\bar \varphi^0\colon \overline{\mathrm{U}}^0 \to \mathrm{BGL}^0$
of functors on pointed motivic spaces.
Note that
\[
\mathrm{BGL}^0=\mathrm{BGL}^{0,0}[\beta,\beta^{-1}]
\]
for the Bott element
$\beta \in \mathrm{BGL}^{2,1}(k)$ (see
(\ref{BGLlaurantAndTT})).
Thus
for every pointed motivic space $A$,
every homogeneous element
$\alpha \in \mathrm{BGL}^0(A)$
can be presented in a unique way in the form
$a \cup \beta^i$
with
$a \in \mathrm{BGL}^{0,0}(A)$.
Define
\begin{eqnarray}
\label{Salgebraic}
\bar s^0_{A}\colon \mathrm{BGL}^0 \to \overline{\mathrm{U}}^0
\end{eqnarray}
by
$\bar s_{A}^0(a \cup \beta^i)= s_{A}(a) \otimes \beta^i \in \overline{\mathrm{U}}^0(A)$,
where
$a \in \mathrm{BGL}^{0,0}(A)$.
It is immediate that $s_A^0$ is natural in $A$.
The computation
\[
\bar \varphi^0_A(\bar s^0(a \cup \beta^i))=
\bar \varphi^0_A(s(a) \otimes \beta^i)=
\varphi(s(a)) \cup \beta^i=
a \cup \beta^i \]
proves the following
\begin{claim}
The map $\bar s^0_{A}$
is a section of
$\bar \varphi^0_{A}$.
\end{claim}


If for a pointed motivic space $A$ the map
$\bar \varphi^0_{A}$
is an isomorphism,
then
$\bar s^0_{A}$
is an isomorphism inverse to
$\bar \varphi^0_{A}$.
In particular, one has
$\bar s^0_{A} \circ \bar \varphi^0_{A}= \id$.

The homomorphism
$\bar \varphi^0_{A}$
is an isomorphism for the pointed motivic spaces
$\mathrm{U}_n$
with
$n \geq N$,
since
$\mathrm{U}$ is weakly
$\mathrm{BGL}$-cellular.
The class
$[u_n] \in \mathrm{U}^{2n,n}(\mathrm{U}_n, \ast)$
of the canonical morphism
$u_n\colon \Sigma^{\infty}_{\Pro^1}\mathrm U_n(-n) \to \mathrm{U}$
then satisfies the following relation:

\begin{eqnarray}
\label{KeyRelationForU}
(\bar s^0_{\mathrm{U}_n} \circ \varphi^0_{\mathrm{U}_n})([u_n])
= [u_n] \otimes 1 \in
\overline{\mathrm{U}}^0(\mathrm{U}_n).
\end{eqnarray}

Now we are ready to check that
$\bar \varphi^0_A$
is an isomorphism for all small pointed motivic spaces.
Recall that for a
small pointed motivic space $A$
there is a canonical isomorphism of the form
\begin{eqnarray}
\label{IndLimit}
\mathrm{U}^{2i,i}(A)= \colim_n [\Sigma^{2n,n}(A),
\mathrm{U}_{i+n}]_{\mathrm{H}_\bullet(S)}
\end{eqnarray}
where
$\Sigma^{2n,n}= \Sigma^n_{\Pro^1}$.
This isomorphism implies that for every element
$a \in \mathrm{U}^{2i,i}(A)$
there exists an integer
$n \geq 0$ such that
$\Sigma^{2n,n}(a)= f^\ast([u_n])$
for an appropriate map
$f\colon \Sigma^{2n,n}(A) \to \mathrm{U}_{i+n}$
in the homotopy category
$\mathrm{H}_\bullet(S)$. Here
$\Sigma^{2n,n}(a)$
is the $n$-fold
$\Sigma_{\Pro^1}$-suspension
of $a$.

The surjectivity of
$\bar \varphi^0_{A}$
is clear, since
$\bar s^0_{A}$
is its section.
It remains to check the injectivity of
$\bar \varphi^0_{A}$.
Take a homogeneous element
$\alpha \in \overline{\mathrm{U}}^{2i,i}(A) \subseteq \overline{\mathrm{U}}^0(A)$
such that
$\bar \varphi^0_{A}(\alpha)=0$.
It has the form
$\alpha= a \otimes \beta^m$
for a homogeneous element
$a \in \mathrm{U}^0(A)$.
Since the element
$\beta$
is invertible in
$\mathrm{BGL}^{\ast,\ast}(k)$, one concludes
$\varphi^0_{A}(a)=0$.

Choose an integer $n \geq 0$ such that
$\Sigma^{2n,n}(a)= f^\ast([u_n])$.
The map $\varphi$ of $\Pro^1$-spectra
respects the suspension isomorphisms.
Thus
$\varphi_{\Sigma^{2n,n}A}(\Sigma^{2n,n}(a))= \Sigma^{2n,n}(\varphi_A(a))=0$
and
$(\bar s^0_{\Sigma^{2n,n}A} \circ \varphi_{\Sigma^{2n,n}A})(\Sigma^{2n,n}(a))=0$
too.
The chain of relations in
$\overline{\mathrm{U}}^0(\Sigma^{2n,n}A)$
given by
\begin{eqnarray*}
  0 & = & \bigl(\bar s_{\Sigma^{2n,n}A}^0 \circ \varphi_{\Sigma^{2n,n}A}\bigr)
  \bigl(\Sigma^{2n,n}(a)\bigr)= \bigl(\bar s_{\Sigma^{2n,n}A}^0 \circ
  \varphi_{\Sigma^{2n,n}A}\bigr)\bigl(f^\ast([u_n])\bigr) \\
   & = & f^\ast\bigl((\bar s^0_{\mathrm{U}_{n+i}} \circ \varphi_{\mathrm{U}_{n+i}})
   ( [u_n] ) \bigr) =f^\ast([u_n] \otimes 1)= f^\ast([u_n]) \otimes 1 \\
& = &\Sigma^{2n,n}(a) \otimes 1
\end{eqnarray*}
implies that
$\Sigma^{2n,n}(a \otimes 1)= \Sigma^{2n,n}(a) \otimes 1=0$.
Because the $n$-fold suspension map
\[
\Sigma^{2n,n}\colon \overline{\mathrm{U}}^0(A) \to
\overline{\mathrm{U}}^0(\Sigma^{2n,n} A) \]
is an isomorphism,
$a \otimes 1=0$
in
$\overline{\mathrm{U}}^0(A)=\overline{\mathrm{U}}^0(A)$.
This proves the injectivity and hence the bijectivity of
$\bar \varphi^0_{A}$
for all small motivic spaces.

To prove that
$\bar \varphi_{A}$
is an isomorphism for
all small motivic spaces
we will use
the fact that
$\bar \varphi_{A}$
respects the
$\Pro^1$-suspension isomorphisms.
For every integer
$i \in \mathbb{Z}$
choose an integer
$n \geq 0$
with
$n \geq i$.
Then for a pointed motivic space $A$ one may form the suspension
$\mathbb{G}^{\wedge n}_m \wedge S^{n-i}_s \wedge A = S^{n,n}\wedge S^{n-i,0}\wedge A$
in the category of pointed motivic spaces, which supplies the commutative diagram
\[
\xymatrix{
\mathrm{BGL}^{i}(A) \ar[r]^-{\Sigma^{2n,n}}_-\cong &
\mathrm{BGL}^{i}(S^{2n,n} \wedge A) &
\mathrm{BGL}^0(S^{n,n}\wedge S^{n-i,0}\wedge A) \ar[l]_-{\Sigma^{i,0}}^-\cong \\
\mathrm{U}^{i}(A) \ar[r]^-{\Sigma^{2n,n}}_-\cong \ar[u]^-{\bar \varphi^i_A}&
\mathrm{U}^{i}(S^{2n,n} \wedge A) \ar[u]^-{\bar \varphi^i_{S^{2n,n}\wedge A}}&
\mathrm{U}^0(S^{n,n}\wedge S^{n-i,0}\wedge A) \ar[l]_-{\Sigma^{i,0}}^-\cong
\ar[u]_-{\bar \varphi^0_{S^{n,n}\wedge S^{n-i,0}\wedge A}}^-\cong} \]
with the suspension isomorphisms
$\Sigma^{2n,n} = \Sigma^n_{\Pro^1}$
and
$\Sigma^{i,0}$.
The map
$\bar \varphi^0_B$
is an isomorphism for $B$ a small pointed motivic space, hence so is
$\bar \varphi^{i}_A$.
We proved that the map
$\bar \varphi_{A}$
is an isomorphism for $A$ being small.
\end{proof}

\subsection{The $\mathrm{BGL}$-cellularity of algebraic cobordism}

\begin{theorem}
\label{Preliminary}
The oriented commutative ring
$\Pro^1$-spectrum
$(\mathrm{MGL}, th^\mathrm{MGL})$
from Example
\ref{OrientationsOfMGLandK}
is weakly
$\mathrm{BGL}$-cellular.
\end{theorem}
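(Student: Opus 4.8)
The plan is to reduce, by means of the Thom isomorphism, to the $\mathrm{MGL}$- and $\mathrm{BGL}$-cohomology of infinite Grassmannians, to dispose of the finite Grassmannians by cellularity, and then to pass to the limit. Recall that the $n$-th term of $\mathrm{MGL}$ is $\mathrm{MGL}_n=\Th(\mathcal T(n))$, the Thom space of the tautological bundle $\mathcal T(n)\to\Gr(n)$, where $\Gr(n)=\colim_m\Gr(n,n+m)$. The monoid morphism $\varphi\colon\mathrm{MGL}\to\mathrm{BGL}$ supplied by Quillen universality sends $th^{\mathrm{MGL}}$ to $th^{\mathrm{BGL}}$, hence is orientation preserving, so it carries the $\mathrm{MGL}$-Thom class of any vector bundle to its $\mathrm{BGL}$-Thom class and $\mathrm{MGL}$-Chern classes to $\mathrm{BGL}$-Chern classes. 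First I would apply the Thom isomorphism (Theorem~\ref{ThomIsomorphism}) to the bundles $\mathcal T(n,n+m)\to\Gr(n,n+m)$, for $E=\mathrm{MGL}$ and for $E=\mathrm{BGL}$, and pass to the limit over $m$: the embeddings $\Gr(n,n+m)\hookrightarrow\Gr(n,n+m+1)$ induce surjections in $E^{\ast,\ast}$ by Remark~\ref{FiniteGrassmannians}, so the relevant $\varprojlim^1$-terms vanish and $E^{p,q}(\mathrm{MGL}_n)=\varprojlim_m E^{p,q}(\Th(\mathcal T(n,n+m)))$ in every bidegree. Since cup product with the Thom class --- which lies in bidegree $(2n,n)$, hence in total degree $0$ --- is compatible with $\varphi$, this identifies $\bar\varphi^0_{\mathrm{MGL}_n}$ with the degree-$0$ part of the base-change map
\[
\bar\varphi^0_{\Gr(n)}\colon \mathrm{MGL}^{\ast}(\Gr(n))\otimes_{\mathrm{MGL}^0(k)}\mathrm{BGL}^0(k)\to\mathrm{BGL}^{\ast}(\Gr(n)).
\]
It thus suffices to prove that $\bar\varphi^0_{\Gr(n)}$ is an isomorphism; this will hold for every $n\ge 0$, so one may take $N=0$ in Definition~\ref{MGLWeaklyCellular}.

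For the finite approximations the statement is routine. Each $\Gr(n,n+m)$ is cellular, being covered by its affine Schubert cells, so for every oriented commutative ring $\Pro^1$-spectrum $E$ the cohomology $E^{\ast,\ast}(\Gr(n,n+m))$ is a free $E^{\ast,\ast}(k)$-module of finite rank on a cell basis whose construction is natural in $E$ and hence respected by $\varphi$. It follows that $\bar\varphi^0_{\Gr(n,n+m)}$ is an isomorphism: both sides are free $\mathrm{BGL}^0(k)$-modules on the common basis. (Alternatively, one argues with the cellular filtration and the five lemma, the map on associated graded being $\varphi^0$ applied to the coefficient modules after base change, which is visibly an isomorphism.)

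The remaining, and main, difficulty is the passage from the $\Gr(n,n+m)$ to $\Gr(n)$: the functor $-\otimes_{\mathrm{MGL}^0(k)}\mathrm{BGL}^0(k)$ does not commute with inverse limits in general, so one cannot simply take $\varprojlim_m$ of the isomorphisms just obtained. Surjectivity of $\bar\varphi^0_{\Gr(n)}$ is, however, already known: the natural section $\bar s^0$ of $\bar\varphi^0$ constructed in the proof of Theorem~\ref{AlmostlyMain} is available, its construction using only Quillen universality and properties $(i)$--$(iii)$ of $\mathrm{BGL}$. Injectivity is the point, and I would treat it one Tate weight at a time. On the $\mathrm{BGL}$ side, Bott periodicity identifies the weight-$\ell$ summand of $\mathrm{BGL}^0(\Gr(n))$ with $\beta^{\ell}K^{TT}_0(\Gr(n))=\beta^{\ell}\varprojlim_m K^{TT}_0(\Gr(n,n+m))$, which by the finite-dimensional isomorphisms above is $\varprojlim_m$ of the weight-$\ell$ summands of $\mathrm{MGL}^0(\Gr(n,n+m))\otimes_{\mathrm{MGL}^0(k)}\mathrm{BGL}^0(k)$. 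On the $\mathrm{MGL}$ side, connectivity of $\mathrm{MGL}$ (so that $\mathrm{MGL}^{2j,j}(k)=0$ for $j>0$) forces, in a fixed bidegree $(2q,q)$, the coefficient of a Chern monomial of weight $d$ in $\mathrm{MGL}^{2q,q}(\Gr(n))$ into the graded piece $\mathrm{MGL}^{2(q-d),q-d}(k)$, which vanishes unless $d\ge q$; this is exactly what is needed to match the weight-$\ell$ summand of $\mathrm{MGL}^0(\Gr(n))\otimes_{\mathrm{MGL}^0(k)}\mathrm{BGL}^0(k)$ with the inverse limit above, so that any element killed by $\bar\varphi^0_{\Gr(n)}$ vanishes under restriction to every $\Gr(n,n+m)$ and hence is zero. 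Carrying out this graded limit comparison in detail is where essentially all the work of the proof lies.
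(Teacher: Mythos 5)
Your reduction of the case $A=\mathrm{MGL}_n=\Th(\mathcal T(n))$ to the infinite Grassmannian $\Gr(n)$ via Thom classes and the Thom isomorphism (Theorem~\ref{ThomIsomorphism}), using that $\varphi$ preserves orientations and hence Thom and Chern classes, is exactly the paper's reduction, and your use of the section $\bar s^0$ for surjectivity is legitimate (its construction in the proof of Theorem~\ref{AlmostlyMain} does not use cellularity, so there is no circularity). But at the Grassmannian step you diverge from the paper and leave the decisive point unproved. The paper simply invokes Theorem~\ref{CohomologyOfGr}, which computes $E^{\ast,\ast}(\Gr(n))=E^{\ast,\ast}(k)[[c_1,\dots,c_n]]$ for the \emph{infinite} Grassmannian and for an arbitrary oriented commutative ring spectrum $E$ -- in particular for both $\mathrm{MGL}$ and $\mathrm{BGL}$ -- so that $\bar\varphi^0_{\Gr(n)}$ is identified with the map of power series rings induced by the identity on the (base-changed) coefficient ring, and no interchange of $\varprojlim_m$ with $-\otimes_{\mathrm{MGL}^0(k)}\mathrm{BGL}^0(k)$ is ever confronted. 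You instead work with the finite Grassmannians $\Gr(n,n+m)$ and must therefore prove precisely that interchange, and this is where your argument stops: you state yourself that ``carrying out this graded limit comparison in detail is where essentially all the work of the proof lies,'' i.e.\ the injectivity of $\bar\varphi^0_{\Gr(n)}$ is not established.

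Two concrete problems with the sketch you do give. First, it rests on the vanishing $\mathrm{MGL}^{2j,j}(k)=0$ for $j>0$; this is true but is a nontrivial connectivity statement nowhere proved or cited in the paper, so it is an external input your proof would have to justify. Second, even granting it, the concluding inference ``any element killed by $\bar\varphi^0_{\Gr(n)}$ vanishes under restriction to every $\Gr(n,n+m)$ and hence is zero'' begs the question: an element of $\mathrm{MGL}^0(\Gr(n))\otimes_{\mathrm{MGL}^0(k)}\mathrm{BGL}^0(k)$ whose image vanishes in every $\mathrm{MGL}^0(\Gr(n,n+m))\otimes_{\mathrm{MGL}^0(k)}\mathrm{BGL}^0(k)$ need not be zero unless one has already shown that the canonical map from the tensor product to $\varprojlim_m$ of the finite-level tensor products is injective, which is the very statement your weight-by-weight comparison was supposed to deliver. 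To repair the proof along the paper's lines, replace the finite-level cell argument and limit passage by a direct appeal to Theorem~\ref{CohomologyOfGr} for $\Gr(n)$ (and for $\Pro^\infty$, $\Spec(k)_+$), together with the fact that $\varphi$ preserves Chern classes; otherwise you must actually carry out the graded $\varprojlim$ versus tensor comparison you only outline.
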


\begin{proof}
We will prove that the homomorphism
$\bar \varphi^0_{A}$
is an isomorphism for ${A}$ being
one of the pointed motivic spaces
$\mathrm{Spec}(k)_+$, $\Pro^{\infty}_+$, $\Gr(n)_+$
and
$\Th(\mathcal{T}_n) = \mathrm{MGL}_n$.

The map
$\bar \varphi^0_{k}$
is an isomorphism,
since it is the identity map.
The case $n=1$ of Theorem
~\ref{CohomologyOfGr}
implies that
$\overline{\mathrm{MGL}}^{\ast}(\Pro^{\infty})=
\overline{\mathrm{MGL}}^{\ast}(k)[[c^{\mathrm{MGL}}]]$,
whence
\[ \overline{\mathrm{MGL}}^0(\Pro^{\infty})=
\overline{\mathrm{MGL}}^0(k)[[c^{\mathrm{MGL}}]]\]
(the formal power series on the first Chern class
$c^{\mathrm{MGL}}$
of the tautological line bundle
$\mathcal O(-1)$).
The same holds for
$\mathrm{BGL}$. Namely
\[ \mathrm{BGL}^0(\Pro^{\infty})= \mathrm{BGL}^0(k)[[c^{\mathrm{BGL}}]]. \]
By its definition the morphism
$\varphi$
takes the orientation class
$th^{\mathrm{MGL}}$
to the orientation class
$th^K$
and so it preserves the first Chern class.
Whence the map
$\bar \varphi^0_{\Pro^{\infty}}$
coincides with the map of formal power series
induced by the isomorphism
$\bar \varphi^0_{k}$
of coefficients rings.
Hence
$\bar \varphi^0_{\Pro^{\infty}}$
is an isomorphism as well.

Consider now
$A= \Gr(n)_+$. By Theorem
\ref{CohomologyOfGr}
its
$\mathrm{MGL}$-cohomology ring
is the ring of formal power series on
the Chern classes
of the tautological bundle
$\mathcal T_n$
over the coefficient ring
$\mathrm{MGL}^{\ast,\ast}(k)$.
The same holds for the
$\mathrm{BGL}$-cohomology ring.
As observed above, the map
$\varphi$
preserves the first Chern class,
thus it preserves all Chern classes.
Thus
$\bar \varphi^0_{\Gr(n)}$
is an isomorphism.

Now consider
$A=\Th(\mathcal T_n)$.
The morphism
$\varphi$
respects Thom classes
(see
(\ref{ThomBarClass})
and
(\ref{ThomClass})).
The vertical arrows in the commutative diagram
\[ \xymatrix@C=5em{
\overline{\mathrm{MGL}}^0((\Th(\mathcal{T}_n), \ast)) \ar[r]^-{\bar \varphi^0_{\Th(\mathcal{T}_n), \ast)}} &
\mathrm{BGL}^0(\Th(\mathcal{T}_n))    \\
\overline{\mathrm{MGL}}^0(\Gr(n))  \ar[r]^-{\bar \varphi^0_{\Gr(n)}}
\ar[u]^-{\overline{\mathrm{thom}}^\mathrm{MGL}} &  \mathrm{BGL}^0(\Gr(n))
\ar[u]_-{{\mathrm{thom}}^{\mathrm{BGL}}}}\]
are isomorphisms induced by the
the Thom isomorphism \ref{ThomIsomorphism}.
The map
$\bar \varphi^0_{\Gr(n)}$
is an isomorphism by the preceding case,
thus
$\bar \varphi^0_{Th(\mathcal T_n), \ast)}$
is an isomorphism.

\end{proof}

\subsection{The main result}
Let $k$ be a field and $S= \mathrm{Spec}(k)$.
By Theorem
\cite[Theorem 2.2.1]{UniThm}
and Example
\ref{OrientationsOfMGLandK}
there exists a unique monoid homomorphism
\begin{eqnarray}
\label{MainMOrphism2}
\varphi\colon \mathrm{MGL} \to \mathrm{BGL}
\end{eqnarray}
in
$\mathrm{SH}(S)$
such that $\varphi(th^\mathrm{MGL}) =th^K$.
It induces a natural transformation
\begin{eqnarray}
\label{CFhomPeriodicbgl}
\bar \varphi_{A}\colon \overline{\mathrm{MGL}}^{\ast}({A})\colon = \mathrm{MGL}^{\ast}({A}) \otimes_{\mathrm{MGL}^{0}(k)}
\mathrm{BGL}^{0}(k) \to
\mathrm{BGL}^{\ast}({A}).
\end{eqnarray}

\begin{theorem}
\label{PreMainThm}
The homomorphism
\[ \bar \varphi_{A}\colon \mathrm{MGL}^{\ast}({A}) \otimes_{\mathrm{MGL}^{0}(k)}
\mathrm{BGL}^{0}(k) \to
\mathrm{BGL}^{\ast}({A}) \]
is an isomorphism for all small pointed motivic spaces.
\end{theorem}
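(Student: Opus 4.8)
The plan is to obtain Theorem \ref{PreMainThm} as a direct instance of Theorem \ref{AlmostlyMain}, applied to the Quillen universal oriented commutative $\Pro^1$-ring spectrum $(\mathrm{U},\mathrm{u}) = (\mathrm{MGL}, th^{\mathrm{MGL}})$ and the target $(\mathrm{E},\mathrm{th}) = (\mathrm{BGL}, th^K)$ of Example \ref{OrientationsOfMGLandK}. Concretely, one recalls that Theorem \ref{AlmostlyMain} asserts precisely that whenever $(\mathrm{U},\mathrm{u})$ is Quillen universal and weakly $\mathrm{BGL}$-cellular, the natural transformation $\bar\varphi_A$ of (\ref{CFhomPeriodic}) is an isomorphism for every small pointed motivic space $A$. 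So it remains only to verify the two hypotheses for $(\mathrm{MGL}, th^{\mathrm{MGL}})$ and to identify the resulting $\bar\varphi_A$ with the map in the statement.

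For the first hypothesis, I would invoke the Universality Theorem (\cite{Vez}, \cite[Thm.~2.3.1]{UniThm}): for any commutative $\Pro^1$-ring spectrum $E$, the monoid homomorphisms $\mathrm{MGL}\to E$ in $\mathrm{SH}(k)$ correspond bijectively to orientations of $E$ via $\psi\mapsto\psi(th^{\mathrm{MGL}})$, which is exactly Quillen universality in the sense of Definition \ref{UniversalityDefn}. For the second hypothesis, weak $\mathrm{BGL}$-cellularity of $(\mathrm{MGL}, th^{\mathrm{MGL}})$ is the content of Theorem \ref{Preliminary} (with $N=1$, since the argument there treats $\mathrm{MGL}_n = \Th(\mathcal T(n))$ together with $\Gr(n)_+$ and $\Pro^\infty_+$ directly).

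Finally I would carry out the bookkeeping: by Quillen universality applied to the orientation $th^K$ on $\mathrm{BGL}$ there is a unique monoid homomorphism $\varphi\colon\mathrm{MGL}\to\mathrm{BGL}$ in $\mathrm{SH}(k)$ with $\varphi(th^{\mathrm{MGL}}) = th^K$, namely (\ref{MainMOrphism2}); the natural transformation it induces via the construction preceding Theorem \ref{AlmostlyMain} is literally the map $\bar\varphi_A$ displayed in (\ref{CFhomPeriodicbgl}) after the substitution $\mathrm{U} = \mathrm{MGL}$, $\mathrm{E} = \mathrm{BGL}$. Thus Theorem \ref{AlmostlyMain} applies verbatim and yields the claim. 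There is essentially no obstacle in this deduction itself: the real work --- the construction and naturality of the section $\bar s^0$, the reduction to the small-object colimit (\ref{IndLimit}), and the cellularity computation for Grassmannians and Thom spaces --- has already been done in Theorems \ref{AlmostlyMain} and \ref{Preliminary}. The only point demanding any care is that the orientations are matched so that $\varphi$ preserves Chern and Thom classes, and this is guaranteed by the uniqueness clause in Quillen universality.
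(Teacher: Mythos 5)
Your proposal is correct and is essentially the paper's own proof: Theorem \ref{PreMainThm} is deduced by citing Quillen universality of $(\mathrm{MGL}, th^{\mathrm{MGL}})$ from \cite{Vez} and \cite{UniThm}, weak $\mathrm{BGL}$-cellularity from Theorem \ref{Preliminary}, and then applying Theorem \ref{AlmostlyMain}. The identification of $\bar\varphi_A$ with the map induced by the unique monoid morphism $\varphi$ sending $th^{\mathrm{MGL}}$ to $th^K$ is exactly how the paper sets it up in (\ref{MainMOrphism2}) and (\ref{CFhomPeriodicbgl}).
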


\begin{proof}
In fact,
$(\mathrm{MGL}, th^{\mathrm{MGL}})$
is Quillen universal
by
\cite{Vez}
and
\cite{UniThm},
and weakly
$\mathrm{BGL}$-cellular by Theorem~\ref{Preliminary}. Theorem~\ref{AlmostlyMain}
completes the proof.
\end{proof}




\begin{remark}
\label{MorelHopkins}
There is an unpublished result due to Morel and Hopkins,
which states that there is a canonical isomorphism
of the form
$$
\mathrm{MGL}^{\ast,\ast}(X) \otimes_{\mathbb{L}} \mathbb{Z}[\beta, \beta^{-1}] \to
\mathrm{BGL}^{\ast,\ast}(X)
$$
where $\mathbb{L}$ denotes the Lazard ring carrying the universal formal group law.
If the canonical homomorphism
$\mathbb{L} \to \mathrm{MGL}^{0}(k)$
is an isomorphism, Theorem
\ref{PreMainThm} implies their result.
\end{remark}

Let $X$ be a smooth $k$-scheme and $Z\subseteq X$ a closed subset, with
open complement $U\subseteq X$.
Consider the small pointed motivic space
$X/U$
and take the quotients of both sides of the isomorphism
(\ref{CFhomPeriodicbgl})
modulo the principal ideal
generated by the element
$1\otimes(\beta +1)$.
Corollary
\ref{LaurantPolinomial}
then implies that the natural transformation
\begin{eqnarray}
\label{CFhomomorphism}
\bar {\bar \varphi}_{X/U}\colon  
\mathrm{MGL}^{\ast}(X/U) \otimes_{\mathrm{MGL}^{0}(k)}
\mathbb{Z} \to K^{TT}_{- \ast,Z}(X)
\end{eqnarray}
is an isomorphism,
where
$K^{TT}_{\ast,Z}(X)$
are the
Thomason-Trobaugh $K$-groups with supports.
This family of isomorphisms shows that the functor
\[ (X,U)\mapsto \mathrm{MGL}^\ast(X/U)\otimes_{\mathrm{MGL}^0(k) }\ZZ
\]
is a ring cohomology theory in the sense of
\cite{PSorcoh}.
This implies the first part of our main result.

\begin{theorem}[Main Theorem]
\label{MainThm}
Let $X\in \Sm/k$ and $Z\subseteq X$ be a closed subset.
\begin{itemize}
\item
The family of isomorphisms
\begin{equation}
\label{CFhomomorphism2}
\bar {\bar \varphi}_{X/(X-Z)}\colon
\mathrm{MGL}^{\ast}(X/(X-Z)) \otimes_{\mathrm{MGL}^{0}(k)}
\mathbb{Z} \to K^{TT}_{- \ast,Z}(X)
\end{equation}
form an isomorphism
$\bar {\bar \varphi}$
of ring cohomology theories on
$\SmOp/k$.
\item
The natural isomorphism
$\bar {\bar \varphi}$ respects orientations provided that
$\mathrm{MGL}^{\ast}$
and
$K^{TT}_{- \ast}$ are considered
as oriented cohomology theories
in the sense of
\cite{PSorcoh}
with orientations given
by the Thom class
$th^{\mathrm{MGL}} \otimes 1$
from
\ref{OrientationsOfMGLandK}
and
the Chern structure
$L/X \mapsto [\mathcal O]-[L^{-1}]$.
In particular, the composition
\[ \xymatrix@C=5em{
  \mathrm{MGL}^{0}(k) \ar[r]^-{a\mapsto a\tensor 1} &
  \mathrm{MGL}^0(k) \tensor \ZZ \ar[r]^-{b\tensor c \mapsto \varphi(b)\cdot c} &  \mathbb Z} \] 
sends the class
$[X] \in \mathrm{MGL}^{0}(X)$
of a smooth projective $k$-variety
$X$ to
the Euler
characteristic
$\chi(X, \mathcal O_X)$
of the structure sheaf
$\mathcal O_X$.
\end{itemize}
\end{theorem}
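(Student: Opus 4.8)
\emph{First assertion.} Most of this is already contained in the paragraph preceding the statement, so the plan here is only to make the naturality explicit. By Theorem~\ref{PreMainThm} each $\bar\varphi_{X/(X-Z)}$ in~(\ref{CFhomPeriodicbgl}) is an isomorphism, and $\bar\varphi$ is a morphism of ring cohomology theories in the sense of \cite{PSorcoh} because it is induced by the monoid morphism $\varphi$ of $\Pro^1$-ring spectra and hence commutes with pull-backs, boundary maps and products. I would then reduce modulo the principal ideal generated by the global element $1\otimes(\beta+1)$ — a reduction that is natural in $(X,U)$ — and invoke the isomorphism~(\ref{BGLAndTTrings2}) of Corollary~\ref{LaurantPolinomial}; this gives, for each $(X,U)$, a ring isomorphism $\bar{\bar\varphi}_{X/(X-Z)}$ onto $K^{TT}_{-\ast,Z}(X)$, natural in $(X,U)$ and compatible with products and boundary maps. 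Transporting the ring-cohomology-theory structure of $K^{TT}_{-\ast,Z}$ along this natural isomorphism equips the source functor $(X,U)\mapsto\mathrm{MGL}^\ast(X/U)\otimes_{\mathrm{MGL}^0(k)}\ZZ$ with the structure of a ring cohomology theory and makes $\bar{\bar\varphi}$ an isomorphism of such.

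\emph{Orientations.} The plan is to use that $\varphi(th^{\mathrm{MGL}})=th^{K}$ by Example~\ref{OrientationsOfMGLandK}, where $th^K$ is the Thom orientation equivalent to the Chern orientation $c^{\mathrm{BGL}}(L)=(-\beta)\cup([\mathcal O]-[L^\vee])$. Then $\bar\varphi$ carries the orientation $th^{\mathrm{MGL}}\otimes 1$ of $\overline{\mathrm{MGL}}^\ast$ to the orientation of $\mathrm{BGL}^\ast$, and modulo $\beta+1$ the unit $-\beta$ becomes $1$, so $c^{\mathrm{BGL}}(L)$ becomes $[\mathcal O]-[L^{-1}]$ — which is precisely why the quotient is taken by $\beta+1$ rather than $\beta-1$. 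Hence $\bar{\bar\varphi}$ is an isomorphism of oriented ring cohomology theories, with $K^{TT}_{-\ast}$ carrying the Chern structure $L/X\mapsto[\mathcal O]-[L^{-1}]$; by the uniqueness of integrations for oriented ring cohomology theories \cite{PSorcoh}, $\bar{\bar\varphi}$ — and already $\varphi$ at the $\mathrm{BGL}$ level — must commute with the Gysin push-forward maps along projective morphisms.

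\emph{Euler characteristic.} Finally I would compare the two push-forwards along the structure map $f\colon X\to\Spec(k)$ of a smooth projective $d$-dimensional variety. By the very definition of the cobordism class one has $[X]_{\mathrm{MGL}}=f^{\mathrm{MGL}}_\ast(1_X)$, whence $\varphi([X]_{\mathrm{MGL}})=f^{\mathrm{BGL}}_\ast(1_X)\in\mathrm{BGL}^0(k)=\ZZ[\beta,\beta^{-1}]$. The point is that the Gysin map of algebraic $K$-theory attached to the orientation $L\mapsto[\mathcal O]-[L^{-1}]$ coincides with the derived push-forward $Rf_\ast$ of Quillen and Thomason--Trobaugh, up to a power of $-\beta$ coming from the Thom-isomorphism normalization; since $Rf_\ast[\mathcal O_X]=\chi(X,\mathcal O_X)$ in $K^{TT}_0(k)=\ZZ$, one then gets $f^{\mathrm{BGL}}_\ast(1_X)=u\cdot\chi(X,\mathcal O_X)$ for a unit $u\in\ZZ[\beta,\beta^{-1}]$ which is a power of $-\beta$ and hence satisfies $u\equiv 1\pmod{\beta+1}$. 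Because the composite in the statement is exactly $a\mapsto(\varphi(a)\bmod(\beta+1))$, it will send $[X]_{\mathrm{MGL}}$ to $\chi(X,\mathcal O_X)$. The main obstacle is this last identification: one must verify that the orientation $L\mapsto[\mathcal O]-[L^{-1}]$ induces on $K$-theory exactly the geometric push-forward $Rf_\ast$ — the algebraic shadow of the Todd-genus computation underlying Conner--Floyd — and keep track of the $-\beta$ normalization carefully enough that the residue modulo $\beta+1$ is $\chi(X,\mathcal O_X)$ with no spurious sign $(-1)^d$.
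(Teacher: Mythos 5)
Your proposal follows essentially the same route as the paper's proof. The first assertion is obtained, as in the text, from Theorem~\ref{PreMainThm} together with the naturality of the reduction modulo $1\otimes(\beta+1)$ and Corollary~\ref{LaurantPolinomial}; the orientation statement is argued exactly as in the paper, using $\varphi(th^{\mathrm{MGL}})=th^{K}$ and the fact that the quotient map $\mathrm{BGL}^{\ast}\to K^{TT}_{-\ast}$ sends $\beta$ to $-1$, so that $c^{\mathrm{BGL}}(L)=(-\beta)\cup([\mathcal O]-[L^{\vee}])$ becomes $L\mapsto[\mathcal O]-[L^{-1}]$. For the Euler characteristic your plan is also the paper's: compare the push-forwards along $f\colon X\to\Spec(k)$, using that a morphism of oriented theories preserving Chern structures preserves the integrations. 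The two points you leave open are precisely the ones the paper settles by citation: that $\varphi$ (hence $\bar{\bar\varphi}$) commutes with the Gysin maps is \cite[Thm.~1.1.10]{PSrr} (the correct reference is the Riemann--Roch paper rather than \cite{PSorcoh}, though the content is the compatibility statement you invoke, with existence of the integrations supplied by \cite[Thm.~4.1.4]{PSpush}), and the identification of the integration on $K^{TT}_{-\ast}$ attached to $L\mapsto[\mathcal O]-[L^{-1}]$ with the higher-direct-image push-forward --- your flagged ``main obstacle'' --- is exactly \cite[Thm.~1.1.11]{PSrr}. Note finally that the paper sidesteps your bookkeeping of the unit $u$ (a power of $-\beta$) by passing to the quotient theory before comparing with the geometric push-forward, which removes any danger of a stray sign $(-1)^{\dim X}$; your normalization is nevertheless consistent, since a power of $-\beta$ is congruent to $1$ modulo $\beta+1$.
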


\begin{proof}
The first part is already proven. To prove the second part, consider
the orientations
$th^{\mathrm{MGL}}$
and
$th^K$
from
\ref{OrientationsOfMGLandK}.
Note that
by the very definition of
$\varphi$
it sends
$th^{\mathrm{MGL}}$
to
$th^K$.
Thus it respects the Chern structures on
$\mathrm{MGL}^{\ast}$
and
$\mathrm{BGL}^{\ast}$
described in Section
\ref{OrientedSpectraAndTheory}.

The quotient map
$\mathrm{BGL}^{\ast} \to K^{TT}_{- \ast}$
takes the Bott element
$\beta$ to $(-1)$.
Thus it takes the Chern structure on
$\mathrm{BGL}^{\ast}$
to the Chern structure on
$K^{TT}_{- \ast}$
given by
$L/X \mapsto [\mathcal O]-[L^{-1}] \in K_0(X)$.
This shows that
$\bar {\bar \varphi}\colon \mathrm{MGL}^{\ast}(-) \otimes_{\mathrm{MGL}^{0}(k)}
\mathbb{Z}^{\ast} \to K^{TT}_{- \ast}$
respects the orientations described in the Theorem
\ref{MainThm}.

Let
$f \mapsto f_{\mathrm{MGL}}$
resp.~$f \mapsto f_K$
be the integrations on
$\mathrm{MGL}^{\ast}$
resp.~$K^{TT}_{- *}$
given by these Chern structures via Theorem
\cite[Thm.~4.1.4]{PSpush}.
By Theorem
\cite[Thm.~1.1.10]{PSrr}
the composition
$\mathrm{MGL}^\ast \to \mathrm{BGL}^\ast \to K^{TT}_{- \ast}$
respects the integrations on
$\mathrm{MGL}^{\ast}$
and
$K^{TT}_{- \ast}$
since it preserves the Chern structures.
In particular, given a smooth projective
$S$-scheme $f\colon X \to \Spec(k)$,
the diagram
\[
\xymatrix{
\mathrm{MGL}^{0}(X) \otimes_{\mathrm{MGL}^{0}(k)}
\mathbb{Z} \ar[r]^-{\bar {\bar \varphi}} \ar[d]_-{f_\mathrm{MGL}} &
  K^{TT}_0(X) \ar[d]^-{f_K}    \\
\mathrm{MGL}^{0}(X) \otimes_{\mathrm{MGL}^{0}(k)}
\mathbb{Z}  \ar[r]^-{\bar {\bar \varphi}} &  K^{TT}_0(k)} \]
commutes
where
$f_{\mathrm{MGL}}$
and
$f_K$
are the push-forward maps for
$\mathrm{MGL}^{\ast}$
and
$K^{TT}_{- \ast}$
respectively.
The integration
$f \mapsto f_K$
on
$K^{TT}_{- \ast}$
respecting the Chern structure
$L \mapsto [\mathcal O]-[L^{-1}]$
coincides with the one given by
the higher direct images by Theorem
\cite[Thm.~1.1.11]{PSrr}.
The last one sends the class
$[V] \in K_0(X)$
of a vector bundle $V$ over a smooth projective
variety $X$ to the Euler characteristic
$\chi(X, \mathcal V)$
of the sheaf $\mathcal{V}$ of sections of
$V$.

Recall that for an oriented cohomology theory
$A$ with a Chern structure
$L \mapsto c(L)$
and for a smooth projective variety
$f\colon X \to \Spec(k)$
its class
$[X]_A \in A(\Spec(k))$
is defined as
$f_A(1)$.
Here
$f_A\colon A(X) \to A(\Spec(k))$
is the push-forward morphism respecting the Chern structure
(see
\cite[Thm.~4.1.4]{PSpush}).
The notation $f_A$ is misleading, since $f_A$ depends on the Chern structure
as well.
Taking the element
$1\in \mathrm{MGL}^{0,0}(X)$
and using the commutativity of the very last
diagram  we see that
\[\bar {\bar \varphi} ([X]_{\mathrm{MGL}} \otimes 1)= \chi(X, \mathcal{O}_X).\]
Whence the Theorem.
\end{proof}

\section{Acknowledgements}
The work was supported by 
the SFB 701 at the Universit\"at Bielefeld, the RTN-Network HPRN-CT-2002-00287,
the grants RFFI 03-01-00633a and INTAS-05-1000008-8118, and the Fields Institute for
Research in Mathematical Science.

\end{document}